\DeclareMathOperator{\gr}{gr} 
\DeclareMathOperator{\Der}{Der}
\DeclareMathOperator{\SL}{SL}
\DeclareMathOperator{\GL}{GL}
\DeclareMathOperator{\Ann}{Ann}
\DeclareMathOperator{\End}{End}
\DeclareMathOperator{\Sym}{Sym}
\DeclareMathOperator{\Ch}{Ch}
\DeclareMathOperator{\Aut}{Aut}
\DeclareMathOperator{\Spec}{Spec}
\DeclareMathOperator{\hgt}{ht}
\begin{document}
\theoremstyle{plain}
\newtheorem{MainThm}{Theorem}
\renewcommand{\theMainThm}{\Alph{MainThm}}
\newtheorem{MainCor}{Corollary}
\renewcommand{\theMainCor}{\Alph{MainCor}}
\newtheorem*{trm}{Theorem}
\newtheorem*{lem}{Lemma}
\newtheorem*{prop}{Proposition}
\newtheorem*{defn}{Definition}
\newtheorem*{thm}{Theorem}
\newtheorem*{example}{Example}
\newtheorem*{cor}{Corollary}
\newtheorem*{conj}{Conjecture}
\newtheorem*{hyp}{Hypothesis}
\newtheorem*{thrm}{Theorem}
\newtheorem*{quest}{Question}
\theoremstyle{remark}
\newtheorem*{rem}{Remark}
\newtheorem*{rems}{Remarks}
\newtheorem*{notn}{Notation}
\newcommand{\Fp}{\mathbb{F}_p}
\newcommand{\Fq}{\mathbb{F}_q}
\newcommand{\Zp}{\mathbb{Z}_p}
\newcommand{\Qp}{\mathbb{Q}_p}
\newcommand{\invlim}{\lim\limits_{\longleftarrow}}
\newcommand{\Frac}{\operatorname{Frac}}
\newcommand{\Kdim}{\operatorname{Kdim}}
\newcommand{\GKdim}{\operatorname{GKdim}}
\newcommand{\Ext}{\operatorname{Ext}}
\newcommand{\injdim}{\operatorname{injdim}}
\newcommand{\Proj}{\operatorname{Proj}}

\title{$\Gamma$-invariant ideals in Iwasawa algebras}
\author{K. Ardakov, S. J. Wadsley}

\address{(K. Ardakov) School of Mathematical Sciences, University of Nottingham, University Park,
Nottingham, NG7 2RD, United Kingdom}

\email{Konstantin.Ardakov@nottingham.ac.uk}

\address{(S. J. Wadsley) DPMMS, University of Cambridge, Centre for
Mathematical Sciences, Wilberforce Road, Cambridge CB3 0WB, UK}

\email{S.J.Wadsley@dpmms.cam.ac.uk}
\begin{abstract} Let $kG$ be the completed group algebra of a uniform pro-$p$ group $G$ with coefficients in a field $k$ of characteristic $p$. We study right ideals $I$ in $kG$ that are invariant under the action of another uniform pro-$p$ group $\Gamma$. We prove that if $I$ is non-zero then an irreducible component of the characteristic support of $kG/I$ must be contained in a certain finite union of rational linear subspaces of $\Spec \gr kG$. The minimal codimension of these subspaces gives a lower bound on the homological height of $I$ in terms of the action of a certain Lie algebra on $G/G^p$. If we take $\Gamma$ to be $G$ acting on itself by conjugation, then $\Gamma$-invariant right ideals of $kG$ are precisely the two-sided ideals of $kG$, and we obtain a non-trivial lower bound on the homological height of a possible non-zero two-sided ideal. For example, when $G$ is open in $\SL_n(\Zp)$ this lower bound equals $2n - 2$. This gives a significant improvement of the results of Ardakov, Wei and Zhang \cite{AWZ1} on reflexive ideals in Iwasawa algebras.
\end{abstract}
\subjclass[2000]{16L30, 16P40}

\maketitle
\let\le=\leqslant  \let\leq=\leqslant
\let\ge=\geqslant  \let\geq=\geqslant

\section{Introduction}
\subsection{Prime ideals in Iwasawa algebras}
In recent years, several attempts have been made to understand the structure of prime ideals in non-commutative Iwasawa algebras. These are the completed group algebras $\Omega_G$ of compact $p$-adic analytic groups $G$ with coefficients in the finite field $\Fp$; we refer the reader to the survey paper \cite{AB} for definitions and more details about these algebras. 

Perhaps the first result in this direction was obtained by Venjakob in \cite{V} with the classification of all prime ideals in $\Omega_G$ where $G$ is a non-abelian soluble pro-$p$ group of rank two. After this, the second author showed in \cite{Wad} that if $G$ is a Heisenberg pro-$p$ group with centre $Z$ then every non-zero prime ideal in $\Omega_G$ must contain the kernel of the map $\Omega_G\rightarrow \Omega_{G/Z}$. More significantly in \cite{AWZ1} and \cite{AWZ2} Wei, Zhang and the first author showed that there are no reflexive ideals in $\Omega_G$ if $G$ is a uniform pro-$p$ group of Chevalley type. The methods of this paper are very much in the spirit of the latter works. 

\subsection{$\Gamma$-invariant right ideals} Our basic set-up is that we have a uniform pro-$p$ group $G$ and a group $\Gamma$ acting on $G$ by group automorphisms. This action will induce an action of $\Gamma$ on $\Omega_G$ and we are interested in studying the $\Gamma$-invariant right ideals in $\Omega_G$. When $\Gamma=G$ and the action is the natural conjugation action then of course the $\Gamma$-invariant right ideals of $\Omega_G$ are just ordinary two-sided ideals. 

As in \cite{AWZ1} and \cite{AWZ2} the basic strategy is to show that under certain conditions, a $\Gamma$-invariant right ideal is controlled by the subring $\Omega_{G_2}$, where $G_2$ is the second term of the lower $p$-series of $G$. Since $G_2$ is a characterisic subgroup, the action of $\Gamma$ on $G$ restricts to an action on $G_2$ and the intersection of our ideal with $\Omega_{G_2}$ is still $\Gamma$-invariant. We then aim to show inductively that any ideal satisfying our conditions is controlled by $\Omega_{G_n}$ for every $n$. In this way we deduce that our original ideal is 0. 

In order to prove a control theorem of this type we associate to each right ideal $I$ in $\Omega_G$ a ``failure of control module" $F_I:=I/(I\cap \Omega_{G_2})\Omega_G$. Then $I$ is controlled by $\Omega_{G_2}$ precisely if $F_I=0$. 

\subsection{Microlocalisation}
Whenever we have a complete filtered ring $A$ whose associated graded ring $B$ is commutative noetherian, for each multiplicatively closed subset $T$ of $B$ consisting of homogeneous elements we can define the microlocalisation of $A$ at $T$, which is a certain completion of an Ore localisation of $A$. Heuristically, this gives rise to a `sheaf' $\mathcal{O}_A$ of complete filtered non-commutative rings on  $\Spec_{\gr}(B)$ --- the set of homogeneous prime ideals in $B$ --- viewed as a subspace of $\Spec(B)$ with its usual topology.  

Similarly, for each finitely generated $A$-module $M$ we can define a `sheaf' $\mathcal{M}$ of $\mathcal{O}_A$-modules by microlocalisation in an analogous way. As usual we say such a sheaf is supported on a subset $C$ if each stalk $\mathcal{M}_P=0$ for each $P$ not in $C$. We call the minimal such set $\Ch(M)$, the \emph{characteristic support} of $M$; see $\S$\ref{Ch}.  

\subsection{Main results}
\label{Sketch}Now the associated graded ring of $A = \Omega_G$ with respect to its natural filtration may be naturally identified with the symmetric algebra $B=\Sym(V)$, where $V=G/G_2$. To study $F_I$ we consider it as the global sections of the `sheaf' $\mathcal{F}_I$; we show in one of our main results, Theorem \ref{SuppFail}, that if $I$ is $\Gamma$-invariant then $\mathcal{F}_I$ must be supported on the set 
\[\mathcal{X}:=\{P\in\Spec_{\gr}(B) : \mathfrak{g}.v\subseteq P\mbox{ for some }v\in V\backslash 0\}.\]
Here $\mathfrak{g}$ is a certain Lie algebra that acts naturally on $V$, constructed from the action of $\Gamma$ on $G$ --- see $\S \ref{Normaliser}$ for details. Since in this case $B$ is a polynomial algebra and since $V$ is a finite set, $\mathcal{X}$ is contained in the union of finitely many rational (that is, defined over $\Fp$) linear subspaces of the affine space $\Spec(B)$.

This result puts a restriction on the characteristic support of $F_I$: the codimension of $\Ch(F_I)$, which coincides with the grade of the module $F_I$, is bounded below by the minimal dimension of a $\mathfrak{g}$-orbit of a non-zero element of $V$.

Suppose now that in addition $I$ is a prime ideal of $A$ which is not controlled by $\Omega_{G_2}$. Then $F_I$ has the same dimension as $A/I$ by Proposition \ref{Pure} and moreover $\Ch(A/I)$ is geometrically pure by Gabber's purity theorem. Since $\Ch(F_I)$ is always contained in $\Ch(A/I)$ by Proposition \ref{ChFrob}, it follows that an irreducible component of $\Ch(A/I)$ must be contained in one of the aforementioned rational linear subspaces. This gives us a severe restriction on the possible characteristic support of $A/I$. We can then deduce in Theorem \ref{Main} that the grade of $A/I$, also known as the homological height of $I$, is bounded below by \[ u:= \min \{\dim \mathfrak{g}\cdot v : v \in V \backslash 0\}.\]
In fact, the conclusion of Theorem \ref{Main} holds if we only assume that $I$ is non-zero and $\Gamma$-invariant.
\subsection{Consequences for Iwasawa algebras}
\label{Table}The effect of this for the Iwasawa algebras of Chevalley type discussed in \cite{AWZ2} is that not only are there no non-zero reflexive prime ideals in $\Omega_G$, (that is, ideals of homological height 1), but in fact non-zero prime ideals of homological height strictly less than $u$ cannot exist: see Theorem \ref{IwaChev}. We compute the lower bound $u$ in this case in $\S\ref{ChevComp}$; here is a table of the values it takes for each root system $\Phi$:
\[\begin{array}{c|cccccccccr} \Phi & A_n & B_n & C_n & D_n & E_6 & E_7 & E_8 & F_4 & G_2 \\ \hline \dim G & n^2+2n & 2n^2 + n & 2n^2 + n & 2n^2 - n & 78 & 133& 248& 52& 14 \\ u & 2n & 4n - 4 & 2n & 4n - 6 & 22 & 34 & 58 & 16 & 6  \end{array}\]
We include the dimensions of the associated Chevalley groups $G$ in this table, for the convenience of the reader.

\subsection{An outline of the paper}
This paper should be viewed as an appropriate generalisation and strengthening of the method introduced in \cite{AWZ1} and \cite{AWZ2}. In Section \ref{Prelim}, we recall basic facts about the Frobenius pairs framework developed in \cite{AWZ1}, and discuss the notions of microlocalisation and characteristic support of modules. We also prove some `geometric' properties of Frobenius pairs in $\S \ref{ChFrob}$.

The goal of Section \ref{ControlSec} is to establish Theorem \ref{Control}, a control theorem for invariant ideals; this result should be viewed as a generalisation of the control theorem for normal elements, \cite[Theorem 3.1]{AWZ1}. We introduce a new notion of a `source of derivations' $\mathcal{S}$ in $\S \ref{SourceDers}$ and an appropriate notion of $\mathcal{S}$-closure $J^\mathcal{S}$ of a graded ideal $J$ in $\S\ref{Star}$; with these in place, the analogue of the derivation hypothesis of \cite[\S 3.5]{AWZ1} simply becomes $\mathcal{D}(J^\mathcal{S}) \subseteq J$. 

In Section \ref{IwaAlgs}, we apply Theorem \ref{Control} to microlocalisations of Iwasawa algebras and prove our main result, Theorem \ref{SuppFail}. The key step in the proof is Proposition \ref{DerHyp}, which shows that the `derivation hypothesis' holds for the microlocalisation of the Iwasawa algebra at any graded prime ideal $P$ not in $\mathcal{X}$. This step relies heavily on the linear algebra calculations performed in \cite[\S1]{AWZ2}. The geometric properties of Frobenius pairs established in $\S\ref{Prelim}$ can now be used to prove the lower bound Theorem \ref{Main}, essentially in the way sketched above.

We discuss the two obvious applications of our methods in Section \ref{Appl}, and compute the invariant $u$ for uniform pro-$p$ groups of Chevalley type in $\S\ref{ChevComp}$.

\subsection{Acknowledgements}
The first author was supported by an Early Career Fellowship from the Leverhulme Trust. The second author was supported by EPSRC research grant EP/C527348/1. 

\section{Microlocalisation, characteristic support and purity}
\label{Prelim}\subsection{Frobenius pairs} Throughout, $k$ will denote an arbitrary base field of characteristic $p$. Let $B$ be a commutative $k$-algebra. The Frobenius map $x \mapsto x^p$ is a ring endomorphism 
of $B$ and gives a ring isomorphism of $B$ onto its image
\[B^{[p]} := \{b^p : b \in B\} \]
in $B$ provided that $B$ is reduced. 

Let $t$ be a positive integer. Whenever $\{y_1,\ldots,y_t\}$ is a
$t$-tuple of elements of $B$ and $\alpha =
(\alpha_1,\ldots,\alpha_t)$ is a $t$-tuple of nonnegative integers,
we define
\[{\mathbf{y}}^{\alpha} =y_1^{\alpha_1}\cdots y_t^{\alpha_t}.\]
Let $[p-1]$ denote the set $\{0,1,\ldots,p-1\}$ and let $[p-1]^t$ be
the product of $t$ copies of $[p-1]$.

\begin{defn}\cite[Definition 2.2]{AWZ1}
Let $A$ be a complete filtered $k$-algebra and let $A_1$ be
a subalgebra of $A$. We always view $A_1$ as a filtered subalgebra
of $A$, equipped with the subspace filtration $F_nA_1 := F_nA \cap
A_1$. We say that $(A,A_1)$ is a \emph{Frobenius pair} if the
following axioms are satisfied:
\begin{enumerate}[{(} i {)}]
\item 
$A_1$ is closed in $A$,
\item 
$\gr A$ is a commutative noetherian domain, and we write 
$B=\gr A$,
\item 
the image $B_1$ of $\gr A_1$ in $B$
satisfies $B^{[p]} \subseteq B_1$, and
\item 
there exist homogeneous elements $y_1,\ldots,y_t \in B$ 
such that
\[B = \bigoplus_{\alpha \in [p-1]^t}  
B_1 \mathbf{y}^{\alpha}.\]
\end{enumerate}
\end{defn}
\subsection{Microlocalisation}
\label{MicLoc}
As in \cite{AWZ1}, one of our main tools is \emph{microlocalisation}. We refer the reader to \cite[\S 4]{AWZ1} for the necessary background.

\begin{lem} Let $(A,A_1)$ be a Frobenius pair, let $T$ be a homogeneous Ore set in $B$ and let $T_1 := T \cap B_1$. Then 
\begin{enumerate}[{(}a{)}]
\item $(Q_T(A), Q_{T_1}(A_1))$ is also a Frobenius pair, and
\item $Q_T(A) = Q_{T_1}(A)$.
\end{enumerate}
\end{lem}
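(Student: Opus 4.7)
The key observation unifying both parts is that homogeneous $p$-th powers of elements of $T$ land in $T_1$: for any $s \in T$, $s^p \in B^{[p]} \subseteq B_1$ by Frobenius axiom (iii), and $s^p \in T$ by multiplicative closure, so $s^p \in T \cap B_1 = T_1$. I would use this first to prove (b). The inclusion $Q_{T_1}(A) \subseteq Q_T(A)$ is automatic from $T_1 \subseteq T$. Conversely, a lift $\tilde s$ in $A$ of $s \in T$ has principal symbol $s$, and the identity $s \cdot s^{p-1} \cdot (s^p)^{-1} = 1$ in the commutative ring $Q_{T_1}(B) = \gr Q_{T_1}(A)$ shows that $s$ is a unit there; the standard criterion that an element of a complete filtered ring is invertible iff its principal symbol is, then gives invertibility of $\tilde s$ in $Q_{T_1}(A)$, so $Q_T(A) \subseteq Q_{T_1}(A)$.

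For (a) I verify the four Frobenius-pair axioms in turn. Set $B' := \gr Q_T(A)$ and let $B_1'$ be the image of $\gr Q_{T_1}(A_1)$, which by standard microlocalisation theory I identify with $Q_T(B)$ and $Q_{T_1}(B_1)$ respectively. Axiom (ii) follows immediately: $Q_T(B)$ is a localisation of the commutative noetherian domain $B$. Axiom (iii) reuses the key observation: $T^{[p]} \subseteq T_1$ combined with $B^{[p]} \subseteq B_1$ yields $(B')^{[p]} \subseteq B_1'$. For axiom (iv) the same generators $y_1, \ldots, y_t \in B \subseteq B'$ should suffice. An arbitrary $b/s \in Q_T(B)$ is rewritten as $b s^{p-1}/s^p$ with $s^p \in T_1$, both $b$ and $s^{p-1}$ are expanded in the basis $\{\mathbf{y}^\alpha : \alpha \in [p-1]^t\}$ over $B_1$, and the products $\mathbf{y}^\alpha \mathbf{y}^\beta$ are reduced using the original decomposition, exhibiting $B' = \sum_\alpha B_1' \mathbf{y}^\alpha$. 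Directness reduces, after clearing a common denominator in $T_1$, to the directness of the original sum together with the domain property of $B$.

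The most delicate axiom is (i), that $Q_{T_1}(A_1)$ is closed inside $Q_T(A)$. Injectivity of the natural map $Q_{T_1}(A_1) \to Q_T(A)$ is easy: $A$ is a domain (since $B$ is), so lifts of elements of $T_1$ are regular in $A$, and hence the map is injective. What remains is to match the intrinsic microlocalisation filtration on $Q_{T_1}(A_1)$ with the subspace filtration induced from $Q_T(A)$; since $Q_{T_1}(A_1)$ is complete in its intrinsic filtration, this match will immediately imply that its image is closed. Filtration-matching reduces at the graded level to showing that the induced map $Q_{T_1}(B_1) \to Q_T(B)$ is a strict injection of graded rings, and this follows by localising the decomposition of axiom (iv), which exhibits $B_1$ as a graded direct summand of the graded $B_1$-module $B$. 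I expect the careful bookkeeping of filtrations here --- rather than any single computation --- to be the main obstacle in writing out a fully rigorous proof.
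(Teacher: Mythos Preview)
Your proof is correct, and for part (b) it takes a genuinely different route from the paper. The paper invokes \cite[Lemma 2.3]{AWZ1} to produce a finite set $X$ that simultaneously generates $A$ freely over $A_1$ and $Q_T(A)$ freely over $Q_{T_1}(A_1)$, and then uses the module-theoretic identification $Q_{T_1}(A) = A \otimes_{A_1} Q_{T_1}(A_1)$ to conclude that both sides equal $X \cdot Q_{T_1}(A_1)$. Your argument via the $p$-th-power trick --- observing that $B_{T_1} = B_T$ because $s^p \in T_1$ for every $s \in T$, and then lifting this isomorphism of associated graded rings to the complete filtered rings --- is more elementary and self-contained, with no appeal to the external freeness lemma. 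The paper's approach, on the other hand, makes the common basis $X$ immediately available for the module corollary that follows, and already packages axiom (iv) for part (a). For part (a) the paper omits the argument entirely, referring to \cite[Proposition 5.1(a)]{AWZ1}; your direct verification of the four axioms is along the expected lines, and your identification of closedness (axiom (i)) as the delicate point --- handled by showing that the graded inclusion $(B_1)_{T_1} \hookrightarrow B_T$ is strict via the localised direct-sum decomposition --- is on target.
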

\begin{proof} Recall \cite[\S 4.2]{AWZ1} that the microlocalisation $Q_T(A)$ is the completion of the localisation $A_S$ of $A$ at the Ore set $S$ in $A$ defined as follows:
\[S = \{s \in A : \gr s \in T\}.\]
The microlocalisation $Q_{T_1}(A_1)$ is defined similarly. Because the proof of part (a) is very similar to that of \cite[Proposition 5.1(a)]{AWZ1}, we will omit the details. For part (b), note that by \cite[Lemma 2.3]{AWZ1} we can find a finite set $X$ that simultaneously generates $A$ as a (free) $A_1$-module, and also $Q_T(A)$ as a (free) $Q_{T_1}(A_1)$-module:
\[A = X \cdot A_1 \quad\mbox{and}\quad Q_T(A) = X \cdot Q_{T_1}(A_1).\]
Now $Q_{T_1}(A) = A \otimes_{A_1} Q_{T_1}(A)$ by definition of the microlocalisation of a module, so
\[Q_{T_1}(A) = X \cdot Q_{T_1}(A) = Q_T(A)\]
as required.
\end{proof}

\begin{cor} Let $M$ be a finitely generated $A$-module. Then $Q_T(M) \cong Q_{T_1}(M)$ as $Q_{T_1}(A_1)$-modules.
\end{cor}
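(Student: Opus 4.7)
The plan is to reduce everything to the equality $Q_T(A)=Q_{T_1}(A)$ established in part (b) of the preceding lemma, together with the standard description of microlocalisation of a finitely generated module as a tensor product with the microlocalised ring. Concretely, by the results of \cite[\S 4]{AWZ1}, for any finitely generated $A$-module $M$ one has a natural isomorphism
\[Q_T(M) \;\cong\; M \otimes_A Q_T(A)\]
of $Q_T(A)$-modules, and similarly, viewing $M$ as a finitely generated $A_1$-module (which it is, since the free basis $X$ given by \cite[Lemma 2.3]{AWZ1} makes $A$ finitely generated over $A_1$),
\[Q_{T_1}(M) \;\cong\; M \otimes_{A_1} Q_{T_1}(A_1).\]
These identifications are the starting point; invoking them is just citing the background material on microlocalisation and should not require new work.

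With these identifications in hand, the proof is a short chain of canonical isomorphisms. By part (b) of the lemma together with the definition $Q_{T_1}(A) = A \otimes_{A_1} Q_{T_1}(A_1)$ of the microlocalisation of the $A_1$-module $A$, we have
\[Q_T(A) \;=\; Q_{T_1}(A) \;=\; A \otimes_{A_1} Q_{T_1}(A_1).\]
Feeding this into the tensor product description of $Q_T(M)$ and using associativity of the tensor product gives
\[Q_T(M) \;\cong\; M \otimes_A Q_T(A) \;\cong\; M \otimes_A \bigl(A \otimes_{A_1} Q_{T_1}(A_1)\bigr) \;\cong\; M \otimes_{A_1} Q_{T_1}(A_1) \;\cong\; Q_{T_1}(M).\]
All of these isomorphisms are right $Q_{T_1}(A_1)$-linear because the $Q_{T_1}(A_1)$-action throughout comes from the rightmost tensor factor, so the resulting isomorphism $Q_T(M) \cong Q_{T_1}(M)$ is an isomorphism of $Q_{T_1}(A_1)$-modules as required.

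The only point that deserves a little care, and which I expect to be the main obstacle if one wants a fully rigorous write-up, is the first step: verifying that the microlocalisation functor really does coincide with $- \otimes_A Q_T(A)$ on the category of finitely generated $A$-modules. Since microlocalisation of a module is defined via completion of an Ore localisation $M_S$, one needs to know that for finitely generated $M$ the natural map $M_S \otimes_{A_S} Q_T(A) \to Q_T(M)$ is already an isomorphism, i.e.\ that no further completion is needed on top of extending scalars to $Q_T(A)$. This is exactly the content of the relevant portion of \cite[\S 4]{AWZ1}, so the step is a citation rather than a new argument.
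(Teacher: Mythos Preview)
Your proof is correct and is essentially identical to the paper's own argument: the paper writes the same chain of isomorphisms $Q_T(M) = M \otimes_A Q_T(A) = M\otimes_A A \otimes_{A_1} Q_{T_1}(A_1) = M \otimes_{A_1} Q_{T_1}(A_1) = Q_{T_1}(M)$ and notes that these respect the right $Q_{T_1}(A_1)$-module structures. Your additional remarks about why $M$ is finitely generated over $A_1$ and why microlocalisation agrees with the tensor product on finitely generated modules are helpful elaborations, but the core argument is the same.
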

\begin{proof} This is a direct consequence of part (b) of the lemma:
\[Q_T(M) = M \otimes_A Q_T(A) = M\otimes_A A \otimes_{A_1} Q_{T_1}(A_1) = M \otimes_{A_1} Q_{T_1}(A_1) = Q_{T_1}(M),\]
and these identifications respect the right $Q_{T_1}(A_1)$-module structures. \end{proof}

\subsection{Control of ideals}
\label{Unloc}
Quite generally, if $A_1$ is a subring of the ring $A$, and $I$ is a right ideal of $A$, we will write $I_1$ for the right ideal $I \cap A_1$ of $A_1$, and say that $I$ is \emph{controlled by $A_1$} if and only if $I = I_1 \cdot A$. Controlled ideals are in some sense ``understood", as they ``come from" the smaller ring $A_1$. 

Frequently we will be able to prove that the microlocalisation of $I$ is controlled by the microlocalisation of $A_1$. The next result tells us how to ``lift" this information back from the microlocalisation. 

\begin{lem}
Let $(A,A_1)$ be a Frobenius pair, let $T$ be a homogeneous Ore set in $B$ and let $I$ be a right ideal of $A$. Then $Q_T(I)$ is controlled by $Q_{T_1}(A_1)$ if and only if the ``failure of control" module $F := I / I_1A$ satisfies $(\gr F)_T = 0$.
\end{lem}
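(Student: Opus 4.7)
The plan is to exploit the short exact sequence
\[ 0 \longrightarrow I_1 A \longrightarrow I \longrightarrow F \longrightarrow 0 \]
of right $A$-modules together with the exactness and flatness properties of microlocalisation. Because $\gr A = B$ is noetherian and $A$ is complete, $A$ is right noetherian, so $I$ is finitely generated and $F$ inherits a good filtration from the subspace filtration on $I$. I would equip $I_1 A$ with the right $A$-module filtration generated by the subspace filtration on $I_1$, so that all three modules in the sequence carry good filtrations compatible with the sequence.

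First, I would apply $Q_T$ to the sequence. Since microlocalisation is exact on finitely generated filtered modules over our complete filtered ring (\cite[\S 4]{AWZ1}), one obtains a short exact sequence
\[ 0 \longrightarrow Q_T(I_1 A) \longrightarrow Q_T(I) \longrightarrow Q_T(F) \longrightarrow 0. \]
Next I would identify $Q_T(I_1 A)$ with $Q_{T_1}(I_1) \cdot Q_T(A)$. Using the previous lemma, $Q_T(A) = Q_{T_1}(A)$, so we may regard $Q_{T_1}(A_1)$ as a subring of $Q_T(A)$. Then $Q_T(I_1 A) = I_1 A \otimes_A Q_T(A) = I_1 \cdot Q_T(A)$, and similarly $Q_{T_1}(I_1) = I_1 \cdot Q_{T_1}(A_1)$; hence $Q_{T_1}(I_1) \cdot Q_T(A) = I_1 \cdot Q_{T_1}(A_1) \cdot Q_T(A) = I_1 \cdot Q_T(A) = Q_T(I_1 A)$, since $Q_{T_1}(A_1) \subseteq Q_T(A)$.

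The control statement ``$Q_T(I) = Q_{T_1}(I \cap A)_1 \cdot Q_T(A)$'' is exactly the vanishing of $Q_T(I)/Q_T(I_1 A)$, where $(Q_T(I))_1 = Q_T(I) \cap Q_{T_1}(A_1) \supseteq Q_{T_1}(I_1)$ and the reverse containment is automatic once the cokernel vanishes. Thus $Q_T(I)$ is controlled by $Q_{T_1}(A_1)$ if and only if $Q_T(F) = 0$.

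Finally, I would invoke the standard bridge between microlocalisation and associated gradeds: for a finitely generated filtered $A$-module $M$ with good filtration, $\gr Q_T(M) \cong (\gr M)_T$, so $Q_T(M) = 0$ if and only if $(\gr M)_T = 0$ (the forward direction is clear; the reverse uses that the filtration on $Q_T(M)$ is separated and complete). Applying this to $M = F$ closes the chain of equivalences. The main delicate point is the identification $Q_T(I_1 A) = Q_{T_1}(I_1)\cdot Q_T(A)$, which rests on the compatibility $Q_T(A) = Q_{T_1}(A)$ just proved, together with the observation that the filtration on $I_1 A$ must be chosen carefully so that the sequence of associated gradeds behaves well; this is the step where one genuinely uses that $(A,A_1)$ is a Frobenius pair.
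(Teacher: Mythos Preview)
Your overall strategy matches the paper's: reduce the statement to the equivalence ``$Q_T(I)$ is controlled by $Q_{T_1}(A_1)$ $\Longleftrightarrow$ $Q_T(F)=0$'', and then use $\gr Q_T(F)\cong(\gr F)_T$ together with completeness. The ``if'' direction is fine as you wrote it: from $Q_T(F)=0$ one gets $Q_T(I)=Q_{T_1}(I_1)\cdot Q_T(A)\subseteq (Q_T(I)\cap Q_{T_1}(A_1))\cdot Q_T(A)\subseteq Q_T(I)$, hence control.

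There is, however, a genuine gap in the ``only if'' direction. Assuming control means $Q_T(I)=(Q_T(I)\cap Q_{T_1}(A_1))\cdot Q_T(A)$; to conclude $Q_T(F)=0$ you need $Q_T(I)=Q_{T_1}(I_1)\cdot Q_T(A)$, and for this you must know that $Q_T(I)\cap Q_{T_1}(A_1)=Q_{T_1}(I_1)$. Your sentence ``the reverse containment is automatic once the cokernel vanishes'' does not address this: it refers to the hypothesis of the \emph{other} direction, and in any case the inclusion $(Q_T(I))_1\subseteq Q_{T_1}(I_1)$ is not a formal consequence of the vanishing of the cokernel. Nothing in your argument computes $Q_T(I)\cap Q_{T_1}(A_1)$.

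The paper handles exactly this point by invoking the fact that microlocalisation preserves pullbacks \cite[Lemma 4.4(e)]{AWZ1}: applying $Q_{T_1}$ to $I\hookrightarrow A$ and intersecting with $Q_{T_1}(A_1)$ gives $Q_{T_1}(I)\cap Q_{T_1}(A_1)=Q_{T_1}(I\cap A_1)=Q_{T_1}(I_1)$. Since $Q_T(I)=I\cdot Q_{T_1}(A_1)=Q_{T_1}(I)$ by Corollary~\ref{MicLoc}, this yields $Q_T(I)\cap Q_{T_1}(A_1)=Q_{T_1}(I_1)$, and then both implications go through as you sketched. So the missing ingredient is precisely the pullback-preservation property of microlocalisation; once you insert it, your proof and the paper's coincide.
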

\begin{proof} The ideas in this proof were already present in the proof of \cite[Theorem 5.2]{AWZ1}, but we include the details for the convenience of the reader.

Write $A' = Q_T(A)$, $A'_1 = Q_{T_1}(A_1)$ and $I' = Q_T(I) = I \cdot A'$. Note that $I'$ can be identified with a right ideal of $A'$, by \cite[Lemma 4.4(c)]{AWZ1}. Since microlocalisation preserves pullbacks \cite[Lemma 4.4(e)]{AWZ1}, $Q_{T_1}(I) \cap Q_{T_1}(A) = Q_{T_1}(I \cap A)$, so 
\[ (I \cdot A'_1) \cap A'_1 = (I \cap A_1) \cdot A'_1.\]
By Corollary \ref{MicLoc}, $I' = I\cdot A'_1$, so
\[ (I' \cap A'_1)\cdot A' = (I\cap A_1)\cdot A'_1 \cdot A' = (I\cap A_1)A \cdot A'.\]
This shows that $I'$ is controlled by $A'_1$ if and only if $I\cdot A' = I_1A \cdot A'$. Since microlocalisation is exact, this is equivalent to $Q_T(F) = 0$. But $\gr Q_T(F) = (\gr F)_T$, so the 
result follows from the fact that $Q_T(A)$ is a complete filtered ring.
\end{proof}

\subsection{The characteristic support}
\label{Ch}Let $A$ be a filtered ring whose associated graded ring $B = \gr A$ is commutative noetherian. Let $\Spec_{\gr}(B)$ denote the set of all graded prime ideals of $B$.
\begin{defn} Let $M$ be a finitely generated $A$-module. The \emph{characteristic support} of $M$ is the following subset of $\Spec_{\gr}(B)$:
\[\Ch(M) := \{P \in \Spec_{\gr}(B): \Ann(\gr M) \subseteq P\}.\]
\end{defn}

\begin{lem} $\Ch(M)$ is independent of the choice of good filtration on $M$ that defines the associated graded module $\gr M$. 
\end{lem}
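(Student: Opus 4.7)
My plan is to reduce the claim to showing that $\operatorname{Supp}_B(\gr M)$ is independent of the choice of good filtration, and then to establish this via a Rees algebra argument. Since a prime $P$ of $B$ contains an ideal iff it contains its radical, one has $\Ch(M)=V(\Ann_B(\gr M))\cap \Spec_{\gr}(B)=\operatorname{Supp}_B(\gr M)\cap \Spec_{\gr}(B)$, so only the support of $\gr M$ as a $B$-module matters.

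I would then form the Rees algebra $\tilde A=\bigoplus_{n\ge 0}F_n A$, a graded $k$-algebra sitting inside $A[t]$ with $\tilde A/t\tilde A\cong B$ and $\tilde A[t^{-1}]\cong A[t,t^{-1}]$. For each good filtration $F$ on $M$, the Rees module $\tilde M_F:=\bigoplus_{n\ge 0}F_nM$ is a finitely generated graded $\tilde A$-module (essentially by definition of ``good''), and there are canonical identifications $\tilde M_F/t\tilde M_F\cong \gr_F M$ as graded $B$-modules and $\tilde M_F[t^{-1}]\cong M[t,t^{-1}]$ as graded $\tilde A[t^{-1}]$-modules. The crucial feature of the second identification is that it is \emph{independent} of the filtration $F$.

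The argument now pivots on the fact that multiplication by $t$ on $\tilde M_F$ coincides with the filtration inclusion $F_nM\hookrightarrow F_{n+1}M$ and is therefore injective. Hence $t$ is a non-zero-divisor on $\tilde M_F$, so it lies in no associated prime of $\tilde M_F$, and every minimal prime of $\operatorname{Supp}_{\tilde A}(\tilde M_F)$ belongs to the open set $D(t)\subseteq \Spec \tilde A$. Consequently $\operatorname{Supp}_{\tilde A}(\tilde M_F)$ equals the closure in $\Spec\tilde A$ of its trace $\operatorname{Supp}_{\tilde A}(\tilde M_F)\cap D(t)=\operatorname{Supp}(\tilde M_F[t^{-1}])=\operatorname{Supp}(M[t,t^{-1}])$ on $D(t)$, and this trace depends only on $M$. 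Taking closures yields $\operatorname{Supp}_{\tilde A}(\tilde M_F)=\operatorname{Supp}_{\tilde A}(\tilde M_{F'})$ for any two good filtrations $F,F'$. Intersecting both sides with $V(t)\cong \Spec B$ and invoking the standard identity $\operatorname{Supp}_{\tilde A}(N/tN)=\operatorname{Supp}(N)\cap V(t)$ (a short application of Nakayama's lemma after localising at a graded prime containing $t$) gives $\operatorname{Supp}_B(\gr_F M)=\operatorname{Supp}_B(\gr_{F'}M)$, from which the lemma follows.

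The main step requiring care is really bookkeeping rather than any deep difficulty: one needs to verify the standard Rees-module facts (finite generation when $F$ is good, $t$-torsion-freeness, and the identifications $\tilde A/t\tilde A\cong B$, $\tilde M_F/t\tilde M_F\cong \gr_F M$), and the observation that a finitely generated module whose minimal associated primes all avoid $t$ has support equal to the closure of its restriction to $D(t)$. None of these is difficult, but together they are where the argument genuinely uses the goodness of the filtration, without which $\tilde M_F$ would fail to be finitely generated over $\tilde A$ and the Noetherian input would disappear.
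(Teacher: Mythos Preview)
Your reduction to showing that $\operatorname{Supp}_B(\gr M)$ is filtration-independent is fine, but the Rees-ring argument has a genuine gap: the ring $\tilde A=\bigoplus_n F_nA$ is \emph{not} commutative here. In the paper's setting $A$ is a noncommutative complete filtered ring (ultimately an Iwasawa algebra) and only $B=\gr A$ is assumed commutative; hence $\tilde A$ inherits the noncommutativity of $A$. Consequently none of the commutative-algebraic machinery you invoke over $\tilde A$ --- $\Spec\tilde A$, $\operatorname{Supp}_{\tilde A}(\tilde M_F)$, associated primes of $\tilde M_F$, Zariski closure of a support set, Nakayama after localising at a prime --- is available as written. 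The step ``$t$ lies in no associated prime, so the support equals the closure of its trace on $D(t)$'' has no meaning over a noncommutative base, and the identification of that trace with $\operatorname{Supp}(M[t,t^{-1}])$ likewise presupposes that $\tilde A[t^{-1}]\cong A[t,t^{-1}]$ is commutative. (There is also a minor indexing issue: the filtrations in play are $\mathbb{Z}$-indexed and in the Iwasawa case negatively graded, so $\bigoplus_{n\ge 0}F_nA$ is not the right object; but this is cosmetic compared with the noncommutativity problem.)

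The paper does not supply its own argument but defers to \cite[Chapter III, Lemma 4.1.9]{LV}. The standard proof there is quite different and avoids the Rees ring entirely: one shows that any two good filtrations $F,F'$ on $M$ are equivalent, i.e.\ there is a constant $c$ with $F_{n-c}M\subseteq F'_nM\subseteq F_{n+c}M$ for all $n$ (this is where goodness and the Noetherianity of $B$ are used), and then deduces directly that $\sqrt{\Ann_B\gr_FM}=\sqrt{\Ann_B\gr_{F'}M}$. That computation takes place entirely inside the commutative ring $B$, so no spectral geometry over $A$ or $\tilde A$ is needed. If you wish to salvage a Rees-style approach you would have to reformulate it so that every support statement is made over a commutative ring (for instance, over the central subring $k[t]$, or by passing immediately to associated graded objects); as it stands the argument does not go through.
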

\begin{proof} This is well-known; see, for example, \cite[Chapter III, Lemma 4.1.9]{LV}.\end{proof}

Any graded prime ideal $P$ of $B$ gives rise to the homogeneous multiplicatively closed set $T_P$, which consists of all homogeneous elements of $B$ \emph{not} in $P$. We can then form the localisation $B_{T_P}$ of $B$ and the microlocalisation $Q_{T_P}(A)$ of $A$. By abuse of notation, we will always write $B_P := B_{T_P}$ and $A_P := Q_{T_P}(A)$ in this case. Furthermore, if $M$ is a finitely generated $A$-module, then we will write $M_P := Q_{T_P}(M)$ for the microlocalisation of $M$ at $T_P$.

\begin{prop} For any finitely generated $A$-module $M$, we have
\[\Ch(M) = \{P \in \Spec_{\gr}(B) : M_P \neq 0\}.\]
\end{prop}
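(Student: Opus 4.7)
The plan is to reduce the statement to a standard fact about supports of finitely generated graded modules over $B$, by passing through the associated graded. Concretely, I want to show that for each graded prime $P \in \Spec_{\gr}(B)$,
\[ M_P = 0 \iff (\gr M)_{T_P} = 0 \iff \Ann(\gr M) \not\subseteq P. \]

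For the first equivalence, I would use that microlocalisation commutes with taking associated graded: there is a natural identification $\gr M_P \cong (\gr M)_{T_P}$, which follows because Ore localisation of filtered modules gives the graded localisation on the associated graded level, and completion preserves the associated graded. (This is implicit in the Frobenius pair machinery of \S\ref{MicLoc} and is cited in the proof of Lemma \ref{Unloc}.) Since $M_P$ is a finitely generated module over the complete filtered ring $A_P$, equipped with a good filtration, $M_P = 0$ if and only if $\gr M_P = 0$, giving the first equivalence.

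The second equivalence is the usual graded support lemma, which I would prove directly. Set $N := \gr M$, a finitely generated graded $B$-module. Since $N$ is graded its annihilator is a graded ideal. If $\Ann(N) \not\subseteq P$, then because $\Ann(N)$ is generated by its homogeneous elements, there exists a homogeneous $a \in \Ann(N) \setminus P$; then $a \in T_P$ annihilates $N$, so $N_{T_P} = 0$. Conversely, suppose $N_{T_P} = 0$ and pick finitely many homogeneous generators $n_1,\ldots,n_s$ of $N$. For each $i$ there exists $t_i \in T_P$ with $t_i n_i = 0$; then $t := t_1 \cdots t_s$ is homogeneous, lies outside $P$ (as $P$ is prime), and annihilates every $n_i$ and hence all of $N$. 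So $t \in \Ann(N) \setminus P$, giving $\Ann(N) \not\subseteq P$.

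The main obstacle is really just the compatibility $\gr M_P \cong (\gr M)_{T_P}$, which needs the chosen good filtration on $M$ to induce a good filtration on $M_P$ whose completion does not destroy information about vanishing; this is standard for microlocalisations and is the reason the equivalence $M_P = 0 \Leftrightarrow \gr M_P = 0$ is legitimate. Once that is in hand, the rest is the graded support argument above, and we also get a consistency check: the right-hand side $\{P : M_P \neq 0\}$ is filtration-independent, matching the filtration-independence of $\Ch(M)$ established in the preceding lemma.
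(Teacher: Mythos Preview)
Your proof is correct and follows essentially the same route as the paper: reduce $M_P = 0$ to $(\gr M)_{T_P} = 0$ via $\gr M_P \cong (\gr M)_{T_P}$ and completeness, then identify the graded support of $N = \gr M$ with $V(\Ann N)$. The only cosmetic difference is in this last step: the paper sandwiches $N$ between direct sums of copies of $B/\Ann(N)$ and conversely, whereas you argue directly with homogeneous annihilators of a finite generating set; both are standard proofs of the same elementary fact.
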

\begin{proof} Let $P \in \Spec_{\gr}(B)$ and let $N = \gr M$; then $M_P = 0$ if and only if $N_P = 0$. Now $N$ is a quotient of a direct sum of copies of $B / \Ann(N)$, and $B / \Ann(N)$ is a submodule of a direct sum of copies of $N$, so $N_P = 0$ if and only if $(B / \Ann(N))_P = 0$. However the last condition is easily seen to be equivalent to $\Ann(N) \nsubseteq P$, and the result follows.
\end{proof}

\begin{cor} If $0 \to L \to M \to N \to 0$ is a short exact sequence of finitely generated $A$-modules, then $\Ch(M) = \Ch(L) \cup \Ch(N)$.
\end{cor}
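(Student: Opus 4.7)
The natural approach is to exploit the characterisation of $\Ch$ given by the preceding Proposition, which identifies $\Ch(M)$ with the set of graded primes $P$ at which the microlocalisation $M_P$ is nonzero. Once this is in hand the argument is essentially formal: all that is needed is the exactness of microlocalisation.

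More precisely, my plan is the following. Fix $P \in \Spec_{\gr}(B)$. Since microlocalisation is an exact functor (this was already invoked in the proof of the ``Control of ideals'' Lemma above, and follows from \cite[Lemma 4.4]{AWZ1}), the short exact sequence $0 \to L \to M \to N \to 0$ induces an exact sequence
\[ 0 \to L_P \to M_P \to N_P \to 0. \]
Consequently $M_P = 0$ if and only if both $L_P = 0$ and $N_P = 0$, i.e.\ $M_P \neq 0$ if and only if $L_P \neq 0$ or $N_P \neq 0$. Applying the preceding Proposition to each of $L$, $M$ and $N$ then gives the desired equality $\Ch(M) = \Ch(L) \cup \Ch(N)$.

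There is no real obstacle to overcome here; the only subtlety is ensuring that microlocalisation is indeed exact (so that the sequence $0 \to L_P \to M_P \to N_P \to 0$ is short exact, rather than just a chain complex), but this is part of the standard theory recorded in \cite[\S 4]{AWZ1}. Had the Proposition not been available, one could alternatively have argued directly from the definition of $\Ch$ using the identity $\Ann(\gr M) = \Ann(\gr L) \cap \Ann(\gr N)$ (up to radicals, for suitable choices of good filtrations), but the microlocalisation route is cleaner.
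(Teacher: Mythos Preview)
Your proof is correct and is precisely the argument the paper has in mind: the paper's proof consists of the single phrase ``Microlocalisation is exact'', which is exactly the key fact you identify, combined with the preceding Proposition to translate between $\Ch(-)$ and nonvanishing of $(-)_P$. You have simply spelled out the details.
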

\begin{proof} Microlocalisation is exact. \end{proof}

\subsection{Characteristic support and Frobenius pairs}
\label{ChFrob}
Let $(A,A_1)$ be a Frobenius pair. The inclusion $\iota : B_1 \hookrightarrow B$ induces a map $\iota_\ast : \Spec_{\gr}(B) \to \Spec_{\gr}(B_1)$, given by $\iota_\ast(P) = P\cap B_1$.

\begin{prop} The map $\iota_\ast$ is a bijection. Let $I$ be a right ideal of $A$, and let $I_1 = I \cap A_1$. Then
$\Ch(A/I) = \Ch(A/I_1A) = \iota_\ast^{-1}\left(\Ch(A_1/I_1)\right).$
\end{prop}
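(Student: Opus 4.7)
The plan has two steps: establish that $\iota_\ast$ is a bijection by exploiting $B^{[p]}\subseteq B_1$, and then identify both $\Ch(A/I)$ and $\Ch(A/I_1 A)$ with $\iota_\ast^{-1}(\Ch(A_1/I_1))$ via the characterisation $\Ch(M) = \{P : M_P \neq 0\}$ of Proposition \ref{Ch} together with the microlocalisation techniques of $\S\ref{MicLoc}$.

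For bijectivity: contraction of a graded prime along $\iota$ is again a graded prime. Surjectivity of $\iota_\ast$ follows from going-up, since $B$ is integral (indeed free of rank $p^t$) over $B_1$. For injectivity, if graded primes $P, Q \subseteq B$ both contract to the same $P_1 \subseteq B_1$, then for any $b \in P$ one has $b^p \in B^{[p]} \cap P \subseteq B_1 \cap P = P_1 \subseteq Q$, whence $b \in Q$ by primality of $Q$; symmetry then gives $P = Q$.

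For the support identities, fix $P \in \Spec_{\gr}(B)$ and set $P_1 = P \cap B_1$; observe $T_P \cap B_1 = T_{P_1}$. Write $A' = Q_{T_P}(A)$ and $A'_1 = Q_{T_{P_1}}(A_1)$. By Corollary \ref{MicLoc}, $Q_{T_P}(I) = I\cdot A'_1 = I\cdot A'$ as right ideals of $A'$, and $Q_{T_P}(I_1 A) = I_1\cdot A'$. The pullback property of microlocalisation (Lemma 4.4(e) of \cite{AWZ1}) yields $I\cdot A' \cap A'_1 = I_1 A'_1$, and sandwiching with $I_1 \subseteq I$ then forces $I_1 A' \cap A'_1 = I_1 A'_1$ as well. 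Since any right ideal of $A'$ containing $A'_1$ must be all of $A'$ (because $A'_1 \cdot A' = A'$), I obtain
\begin{align*}
(A/I)_P = 0 &\iff I\cdot A' = A' \iff I\cdot A' \cap A'_1 = A'_1 \\
&\iff I_1 A'_1 = A'_1 \iff (A_1/I_1)_{P_1} = 0,
\end{align*}
and similarly $(A/I_1 A)_P = 0 \iff I_1 A' = A' \iff I_1 A'_1 = A'_1$. Proposition \ref{Ch} then converts these chains into the desired equalities $\Ch(A/I) = \Ch(A/I_1 A) = \iota_\ast^{-1}(\Ch(A_1/I_1))$.

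The main obstacle is the bookkeeping around the microlocalisations of $I$ and $I_1 A$, and in particular deducing the key intersection identity $I_1 A' \cap A'_1 = I_1 A'_1$ from the pullback property; once this is in hand, the remaining manipulations are purely formal.
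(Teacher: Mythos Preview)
Your proof is correct, and the bijectivity argument is identical to the paper's. For the support identities, however, you take a somewhat different route. The paper establishes a cyclic chain of inclusions
\[
\Ch(A/I) \subseteq \Ch(A/I_1A) \subseteq \iota_\ast^{-1}\Ch(A_1/I_1) \subseteq \Ch(A/I)
\]
using purely module-theoretic facts: the first inclusion because $A/I$ is a quotient of $A/I_1A$, the second from the tensor identity $(A/I_1A)_P \cong (A_1/I_1)\otimes_{A_1}A_P$, and the third because $A_1/I_1$ embeds as an $A_1$-submodule of $A/I$ together with $M_P = M_{P_1}$. You instead argue ideal-theoretically, invoking the pullback property of microlocalisation (exactly as in the proof of Lemma~\ref{Unloc}) to obtain $I\cdot A' \cap A'_1 = I_1 A'_1$ and then reading off both equivalences directly. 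Your approach is slightly more hands-on and reuses the machinery already set up for Lemma~\ref{Unloc}; the paper's approach is cleaner in that it avoids the pullback lemma entirely and never needs the auxiliary identity $I_1 A' \cap A'_1 = I_1 A'_1$. Both are short and neither is clearly superior.
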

\begin{proof} Since $B$ is a finitely generated $B_1$-module by definition, $B$ is integral over $B_1$. Hence $\iota_\ast$ is surjective. Because $b^p \in B_1$ for all $b \in B$, we see that $\iota_\ast$ is injective. In fact, it is easy to see that the inverse of $\iota_\ast$ is given explicitly by the formula
\[ \iota_\ast^{-1}(\mathfrak{p}) = \sqrt{\mathfrak{p}B},\]
for any $\mathfrak{p} \in \Spec_{\gr}(B_1)$.

Let $P \in \Spec_{\gr}(B)$ and write $P_1 = \iota_\ast P$. Since $B_1 \backslash P_1 = (B \backslash P)\cap B_1$,  Corollary \ref{MicLoc} implies that $M_P = M_{P_1}$ for any finitely generated $A$-module $M$. We will use Proposition \ref{Ch} without further mention in what follows.

Since $A/I$ is a quotient of $A/I_1A$, $\Ch(A/I) \subseteq \Ch(A/I_1A)$ by Corollary \ref{Ch}. Now if $P_1 \notin \Ch(A_1/I_1)$, then $(A_1/I_1)_{P_1 } = 0$, so 
\[(A/I_1A)_P = (A_1/I_1)\otimes_{A_1}A\otimes_A A_P = (A_1/I_1)\otimes_{A_1} A_P = 0\]
whence $P\notin \Ch(A/I_1A)$. This shows that $\Ch(A/I_1A) \subseteq \iota_\ast^{-1}\Ch(A_1/I_1)$.

Finally, if $P \notin \Ch(A/I)$ then $(A/I)_P = 0$ and hence $(A/I)_{P_1} = 0$. Since $A_1/I_1$ is an $A_1$-submodule of $A/I$, Corollary \ref{Ch} implies that $P \notin \iota_\ast^{-1} \Ch(A_1/I_1)$. Hence $\iota_\ast^{-1}\Ch(A_1/I_1) \subseteq \Ch(A/I)$ and the proof is complete.
\end{proof}
\subsection{Purity of modules}
\label{Pure}
Let $A$ be an Auslander-Gorenstein ring and let $M$ be a finitely generated $A$-module. Recall that $M$ has a \emph{grade} $j_A(M)$ defined by the formula
\[j_A(M) = \min \{ j : \Ext_A^j(M,A)\neq 0\}.\]
Recall that if $0 \to L \to M \to N \to 0$ is a short exact sequence, then we have
\[j_A(M) = \min\{j_A(L), j_A(N)\}.\]
$M$ is said to be \emph{pure} if $j_A(N) = j_A(M)$ for all non-zero submodules $N$ of $M$; clearly any submodule of a pure module is itself pure.

\begin{lem} Let $(A,A_1)$ be a Frobenius pair such that $B$ and $B_1$ are Gorenstein, and let $M$ be a finitely generated $A$-module.
\begin{enumerate}[{(}a{)}]
\item $A$ and $A_1$ are also Auslander-Gorenstein.
\item The grade of $M$ equals the codimension of the characteristic support of $M$:
\[j_A(M) = j_B(\gr M) = \min \{\hgt P : P \in \Ch(M)\}.\]
\item $j_A(M) = j_{A_1}(M|_{A_1}).$
\item $M$ is pure if and only if $M|_{A_1}$ is pure.
\end{enumerate}
\end{lem}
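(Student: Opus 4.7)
The plan is to handle the four parts in sequence, relying on the filtered-graded methods of Bj\"ork and the geometric analysis of Frobenius pairs developed in $\S\ref{ChFrob}$. Part (a) is standard: a commutative Gorenstein noetherian ring is automatically Auslander-Gorenstein, so $B$ and $B_1$ are. Because $A$ and $A_1$ are complete filtered rings with noetherian associated graded rings, their filtrations are Zariskian, and the standard lifting theorem (see \cite[Chapter III]{LV}) transfers the Auslander-Gorenstein property from $\gr$ up to the filtered ring. The identity $j_A(M) = j_B(\gr M)$ in part (b) is a consequence of the same filtered-graded machinery; the second equality is standard commutative algebra, since over the Cohen-Macaulay ring $B$ one has $j_B(N) = \hgt \Ann_B(N)$ for any finitely generated $N$, and when $N$ is graded the minimal primes of $\Ann_B(N)$ are themselves graded, so this minimum is realised on $\Ch(M)$.

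For part (c), I combine (b) with the bijection $\iota_\ast$ from Proposition \ref{ChFrob}. Its proof uses only the identification $M_P = M_{P_1}$ coming from Corollary \ref{MicLoc}, so it applies verbatim with an arbitrary finitely generated $M$ in place of $A/I$, showing that $\iota_\ast$ restricts to a bijection $\Ch(M) \to \Ch(M|_{A_1})$. It remains to check that $\iota_\ast$ preserves heights: since $B$ is finite free, hence integral, over $B_1$, any quotient $B/P$ is integral over $B_1/P_1$, giving $\dim B/P = \dim B_1/P_1$; because $\dim B = \dim B_1$ as well, Cohen-Macaulayness yields $\hgt P = \dim B - \dim B/P = \dim B_1 - \dim B_1/P_1 = \hgt P_1$. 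The two minima from (b) therefore coincide.

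For part (d), the easy direction is immediate: if $M|_{A_1}$ is pure and $N$ is a non-zero $A$-submodule of $M$, then by (c), $j_A(N) = j_{A_1}(N) = j_{A_1}(M) = j_A(M)$. For the converse, suppose $M$ is pure and let $N$ be a non-zero $A_1$-submodule of $M$. Since $A$ is free of finite rank as an $A_1$-module on both sides (as in the proof of Lemma \ref{MicLoc}), the derived tensor-hom adjunction gives
\[\Ext^j_A(N \otimes_{A_1} A, A) \;\cong\; \Ext^j_{A_1}(N, A|_{A_1}) \;\cong\; \Ext^j_{A_1}(N, A_1)^r\]
for some $r \geq 1$, so $j_A(N \otimes_{A_1} A) = j_{A_1}(N)$. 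The multiplication map $N \otimes_{A_1} A \twoheadrightarrow NA$ is surjective, hence $j_A(NA) \geq j_{A_1}(N)$; but $NA$ is a non-zero $A$-submodule of the pure module $M$, so purity gives $j_A(NA) = j_A(M) = j_{A_1}(M)$, and combined with the trivial inequality $j_{A_1}(N) \geq j_{A_1}(M)$ this forces $j_{A_1}(N) = j_{A_1}(M)$, as required.

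The main subtlety is locating a form of the filtered-graded lifting theorem that applies in our setting of complete filtered rings with noetherian $\gr$; once (a) and the identity $j_A(M) = j_B(\gr M)$ are in hand, the remaining parts reduce to commutative algebra (height-preservation under the finite free extension $B_1 \subseteq B$) and the formal consequence of $A$ being free over $A_1$ on both sides.
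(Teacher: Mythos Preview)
Your proposal is correct and follows essentially the same route as the paper: both cite the filtered-graded lifting results of Bj\"ork/Bj\"ork--Ekstr\"om for (a) and (b), prove (c) by combining (b) with height-preservation under $\iota_\ast$ and the identification $\Ch(M)\leftrightarrow\Ch(M|_{A_1})$, and prove the nontrivial direction of (d) via the surjection $N\otimes_{A_1}A\twoheadrightarrow NA$ together with freeness of $A$ over $A_1$. Two small remarks: the paper gets height-preservation more cheaply by observing that $\iota_\ast$ extends to an order-preserving bijection $\Spec(B)\to\Spec(B_1)$, and your claim that the proof of Proposition~\ref{ChFrob} ``applies verbatim'' to general $M$ is slightly overstated (that proof uses the specific triangle $A_1/I_1\hookrightarrow A/I\twoheadleftarrow A/I_1A$), though the conclusion you want follows directly from Corollary~\ref{MicLoc} and Proposition~\ref{Ch} anyway.
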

\begin{proof} (a) Use  \cite[Theorem 3.9]{Bj}.

(b) For the first equality, use \cite[Remark 5.8]{BjE}. For the second equality, see \cite{Bass}.

(c) The map $\iota_\ast : \Spec_{\gr}(B) \to \Spec_{\gr}(B_1)$ extends to an order preserving bijection between the usual spectra $\Spec(B)$ and $\Spec(B_1)$; this shows that $\hgt \iota_\ast P = \hgt P$ for any $P\in\Spec_{\gr}(B)$. Since $M$ is finitely generated, $M$ is a finite extension of cyclic $A$-modules; using Corollary \ref{Ch} and Proposition \ref{ChFrob}, we see that 
\[\iota_\ast \Ch(M) = \Ch(M|_{A_1}).\] 
Part (c) now follows from part (b).

(d) $(\Leftarrow)$ This is easy, given part (c). 

$(\Rightarrow)$ Let $N$ be a non-zero $A_1$-submodule of $M$. Since $N\otimes_{A_1} A$ surjects onto the non-zero $A$-submodule $N\cdot A$ of $M$ and since $M$ is pure, we have
\[j_{A_1}(M) = j_A(M) = j_A(N\cdot A) \geq j_A(N\otimes_{A_1}A) = j_{A_1}(N) \geq j_{A_1}(M)\]
Here we have used the fact that $A$ is a free $A_1$-module \cite[Lemma 2.3]{AWZ1}. Hence $j_{A_1}(N) = j_{A_1}(M)$ and $M|_{A_1}$ is pure. 
\end{proof}

\begin{prop} Let $I$ be a right ideal of $A$, let $I_1 = I\cap A_1$ and suppose that $A/I$ is pure. Then either $I/I_1A$ is zero, or it is pure of the same grade as $A/I$.
\end{prop}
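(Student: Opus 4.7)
The plan is to set $g = j_A(A/I)$, assume $F := I/I_1A$ is non-zero, and show that $F$ is pure of grade $g$.

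First, I would consider the short exact sequence
\[ 0 \to F \to A/I_1A \to A/I \to 0. \]
By Proposition \ref{ChFrob}, $\Ch(A/I_1A) = \Ch(A/I)$, and hence Lemma \ref{Pure}(b) gives $j_A(A/I_1A) = g$. The grade formula for short exact sequences then produces the bound $j_A(F) \geq g$, so the task reduces to upgrading this to equality together with purity.

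The main step is to show that $A/I_1A$ is itself pure of grade $g$ as a right $A$-module, whereupon its non-zero $A$-submodule $F$ is automatically pure of the same grade. By Lemma \ref{Pure}(d), it suffices to verify this after restricting scalars to $A_1$. Because $A_1 \cap I = I_1$, the canonical map $A_1/I_1 \hookrightarrow A/I$ is an injection of right $A_1$-modules; since $A/I$ is pure as an $A_1$-module by Lemma \ref{Pure}(d) applied in the reverse direction, its non-zero submodule $A_1/I_1$ is pure of grade $g$ over $A_1$.

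Using the freeness of $A$ over $A_1$ provided by \cite[Lemma 2.3]{AWZ1}, I would exhibit $A/I_1A$ as a finite $A_1$-module extension whose successive quotients are isomorphic to $A_1/I_1$. Finite extensions of pure modules of the same grade are pure (as one checks by intersecting any submodule with the filtration layers), so $A/I_1A$ is pure of grade $g$ as an $A_1$-module, and a final application of Lemma \ref{Pure}(d) transfers this conclusion back to $A$.

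The most delicate ingredient is the construction of the $A_1$-module filtration of $A/I_1A$ with subquotients $A_1/I_1$: freeness of $A$ as a one-sided $A_1$-module gives an $A_1$-module decomposition of $A$, but the non-commutative interaction between lifts of the $\mathbf{y}^\alpha$ and elements of $I_1$ must be controlled when passing to the quotient by the right ideal $I_1A$. The Frobenius pair axioms, in particular $B^{[p]} \subseteq B_1$, should supply exactly the structural rigidity needed to ensure the successive quotients have the claimed isomorphism type over $A_1$.
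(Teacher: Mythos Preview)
Your overall architecture matches the paper's: both arguments reduce to showing that $A/I_1A$ is pure of grade $j_A(A/I)$, and both first deduce from Lemma \ref{Pure}(d) that $A_1/I_1$ is a pure $A_1$-module (as an $A_1$-submodule of the pure $A_1$-module $A/I$). The divergence is in how one passes from purity of $A_1/I_1$ over $A_1$ to purity of $A/I_1A \cong (A_1/I_1)\otimes_{A_1}A$ over $A$.

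The paper does this in one stroke using the double-Ext criterion for purity \cite[Theorem 2.12]{BjE}: since $A$ is free as a left and right $A_1$-module \cite[Lemma 2.3]{AWZ1}, \cite[Proposition 1.2]{AWZ1} gives
\[
\Ext_A^i\bigl(\Ext_A^i(A/I_1A,A),A\bigr)\;\cong\;\Ext_{A_1}^i\bigl(\Ext_{A_1}^i(A_1/I_1,A_1),A_1\bigr)\otimes_{A_1}A,
\]
so the vanishing of these groups for $i>j_{A_1}(A_1/I_1)=j_A(A/I_1A)$ transfers directly. No filtration is needed.

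Your route instead asks for a right $A_1$-module filtration of $A/I_1A$ with successive quotients isomorphic to $A_1/I_1$, and this is where there is a genuine gap. Left freeness gives $A=\bigoplus_\alpha A_1x_\alpha$ and hence $A/I_1A=\bigoplus_\alpha (A_1/I_1)\bar{x}_\alpha$ as \emph{left} $A_1$-modules, but the right $A_1$-action mixes these summands: for $b\in A_1$ one has $x_\alpha b=\sum_\beta c_{\alpha\beta}(b)x_\beta$ with no reason for the off-diagonal $c_{\alpha\beta}(b)$ to vanish or to lie in $I_1$. Starting instead from a right basis $A=\bigoplus_\alpha x_\alpha A_1$ runs into the dual problem that $x_\alpha I_1\subseteq I_1A$ is not automatic, so one cannot identify the induced subquotients with $A_1/I_1$. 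The Frobenius-pair axiom $B^{[p]}\subseteq B_1$ only controls the commutative graded picture; it does not by itself lift to a bimodule filtration of $A$ with subquotients $A_1$, which is what your argument would need. You flag this step as ``delicate'' but do not carry it out, and I do not see how to complete it without essentially reverting to the Ext calculation the paper uses.
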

\begin{proof} By part (d) of the lemma, $A_1/I_1$ is pure, being an $A_1$-submodule of the pure $A_1$-module $A/I$. It will be enough to show that $A/I_1A \cong (A_1/I_1) \otimes_{A_1} A$ is also pure, and has the same grade as $A/I$.

Recall that by \cite[Theorem 2.12]{BjE}, a finitely generated module $M$ over an Auslander-Gorenstein ring $R$ is pure if and only if $\Ext_R^i(\Ext_R^i(M,R),R) = 0$ for all $i > j_R(M)$. Hence $\Ext_{A_1}^i(\Ext_{A_1}^i(A_1/I_1,A_1),A_1) = 0$ for all $i > j_{A_1}(A_1/I_1)$. 

Since $A$ is a free right and left $A_1$-module by \cite[Lemma 2.3]{AWZ1}, \cite[Proposition 1.2]{AWZ1} implies that $j_A(A/I_1A) = j_{A_1}(A/I_1)$ and also that 
\[\Ext_A^i(\Ext_A^i(A/I_1A, A), A) \cong \Ext_{A_1}^i(\Ext_{A_1}^i(A_1/I_1,A_1),A_1) \otimes_{A_1} A = 0\]
for all $i > j_A(A/I_1A)$. So $A/I_1A$ is pure, and our result follows.
\end{proof}

\section{A control theorem for $\mathcal{S}$-invariant right ideals}
\label{ControlSec}
\subsection{Inducing derivations on $\gr A$}
\label{Source}
Let $A$ be a filtered ring with associated graded ring $B$ and
let $\alpha$ be a ring endomorphism of $A$. Suppose that there is an integer $m_\alpha \geq 1$ such that
\[(\alpha - 1)(F_nA) \subseteq F_{n-m_\alpha}A\]
for all $n\in\mathbb{Z}$. This induces additive maps
\[\begin{array}{cccc}
d_{\alpha} :& \frac{F_nA}{F_{n-1}A} &\to & \frac{F_{n-m_\alpha}A}{F_{n-m_\alpha-1}A} \\
\quad &\quad \\
& x + F_{n-1}A &\mapsto &\alpha(x) - x + F_{n-m_\alpha-1}A
\end{array}
\]
for each $n\in\mathbb{Z}$, which patch together to give a graded endomorphism $d_\alpha$ of the abelian group $B$.

\begin{lem} $d_\alpha$ is a graded derivation of $B$ of degree $m_\alpha$.
\end{lem}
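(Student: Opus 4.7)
The plan is to verify the two things one has to check: that $d_\alpha$ is well-defined and additive on each graded piece (so that it patches to a well-defined graded $k$-linear endomorphism of $B$ of degree $m_\alpha$), and that it satisfies the Leibniz rule on products of homogeneous elements.

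For well-definedness, take $x \in F_n A$ representing $0$ in $F_nA/F_{n-1}A$, so $x \in F_{n-1}A$. Then by the hypothesis on $\alpha$, $\alpha(x) - x \in F_{n-1-m_\alpha}A \subseteq F_{n-m_\alpha-1}A$, so the image of $\alpha(x) - x$ vanishes in $F_{n-m_\alpha}A/F_{n-m_\alpha-1}A$. Additivity is immediate from additivity of $\alpha - 1$, so $d_\alpha$ assembles into a well-defined graded additive endomorphism of $B$ of degree $m_\alpha$.

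The key identity for the Leibniz rule is the standard symbol-calculus decomposition
\[
\alpha(xy) - xy \;=\; (\alpha(x) - x)\,\alpha(y) \;+\; x\,(\alpha(y) - y),
\]
valid for any $x, y \in A$ because $\alpha$ is a ring homomorphism. I would apply this with homogeneous representatives $x \in F_nA$ and $y \in F_mA$. Then $(\alpha(x)-x)\alpha(y) \in F_{n-m_\alpha}A \cdot F_m A \subseteq F_{n+m-m_\alpha}A$ and similarly $x(\alpha(y)-y) \in F_{n+m-m_\alpha}A$, so each summand lives in the right filtered piece to contribute to $d_\alpha(\bar x\,\bar y)$ in degree $n+m-m_\alpha$.

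The one small point that makes the calculation work is that $\alpha(y)$ and $y$ have the same principal symbol: their difference lies in $F_{m-m_\alpha}A \subseteq F_{m-1}A$, so $\overline{\alpha(y)} = \bar y$ in $F_mA/F_{m-1}A$. Consequently the image of $(\alpha(x)-x)\alpha(y)$ in $\gr_{n+m-m_\alpha} B$ is $d_\alpha(\bar x)\,\bar y$, while the image of $x(\alpha(y)-y)$ is $\bar x\, d_\alpha(\bar y)$. Summing the two contributions and using commutativity of $B$ where appropriate yields $d_\alpha(\bar x\,\bar y) = d_\alpha(\bar x)\,\bar y + \bar x\, d_\alpha(\bar y)$, which extends by bilinearity to all of $B$. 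There is no real obstacle; the only item requiring any care is keeping track of the filtration degrees so that each term is assigned to the correct homogeneous component.
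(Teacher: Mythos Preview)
Your proof is correct and follows essentially the same approach as the paper's: both verify the Leibniz rule by lifting homogeneous elements to filtered representatives and using $m_\alpha \geq 1$ to discard a lower-order correction term. The only cosmetic difference is that you use the telescoping identity $\alpha(xy)-xy = (\alpha(x)-x)\alpha(y) + x(\alpha(y)-y)$ and then invoke $\overline{\alpha(y)} = \bar y$, whereas the paper expands $(\alpha(x)-x)(\alpha(y)-y)$ and observes this product lies in $F_{m+n-2m_\alpha}A \subseteq F_{m+n-m_\alpha-1}A$; these are two ways of packaging the same estimate.
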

\begin{proof} Let $x\in F_mA$ and $y \in F_nA$, so that $X = x + F_{m-1}A$ and $Y = y + F_{n-1}A$ are homogeneous elements of $B$ of degree $m$ and $n$ respectively. Then
\[\begin{array}{llll}
d_\alpha(X \cdot Y) &=& \alpha(xy)-xy + F_{m+n-m_\alpha-1}A, \\
d_\alpha(X) \cdot Y &=& \alpha(x)y - xy + F_{m+n-m_\alpha-1}A, \quad\mbox{and} \\
X \cdot d_\alpha(Y) &=& x\alpha(y) - xy + F_{m+n-m_\alpha-1}A.
\end{array}\]
Because $m_\alpha \geq 1$, $(\alpha(x)-x)(\alpha(y)-y) \in F_{m+n-2m_\alpha}A \subseteq F_{m+n-m_\alpha-1}A$. Hence
\[\alpha(xy) - xy \equiv \alpha(x)y - xy + x\alpha(y) - xy \mod F_{m+n-m_\alpha-1}A\]
and the result follows.
\end{proof}

\subsection{New sources of derivations}
\label{SourceDers}We now introduce a new notion of ``source of derivations" for a Frobenius pair, which is slightly different from the one introduced in \cite[\S 3.3]{AWZ1}. We hope that the inconsistency in terminology will not cause any confusion; it is just a matter of language. After reading this paper, the reader may get the feeling that the ``real" source of our derivations --- at least for our applications --- is the Lie algebra $\mathfrak{g}$ defined below in $\S\ref{Normaliser}$.

\begin{defn}
A \emph{source of derivations} for a Frobenius pair $(A,A_1)$ is
a set $\mathbf{a} = \{\alpha_0,\alpha_1,\alpha_2,\ldots\}$ of endomorphisms of $A$ such that
there exist functions $\theta, \theta_1:\mathbf{a} \to\mathbb{N}$
satisfying the following conditions:
\begin{enumerate}[{(} i {)}]
\item
$(\alpha_r - 1)F_nA \subseteq F_{n - \theta(\alpha_r)}A$
for all $r\geq 0$ and all $n\in\mathbb{Z}$
\item
$(\alpha_r - 1)F_nA_1 \subseteq F_{n - \theta_1(\alpha_r)}A$ for all $r\geq 0$
and all $n\in\mathbb{Z}$,
\item $\theta_1(a_r) - \theta(\alpha_r) \to \infty$ as $r \to \infty$.
\end{enumerate}
\end{defn}

As in \cite[\S 5]{AWZ1}, we will need to know that sources of derivations are compatible with microlocalisations. The next result shows that this is indeed the case.

\begin{prop} Let $(A,A_1)$ be a Frobenius pair, let $T$ be a homogeneous Ore set in $B$, and let $T_1 = B_1 \cap T$. Then
\begin{enumerate}[{(}a{)}]
\item each source of derivations $\mathbf{a}$ of $(A,A_1)$ induces a source of derivations $\mathbf{a}_T$ of $(Q_T(A),Q_{T_1}(A_1))$,
\item the derivations of $B_T$ induced by $\mathbf{a}_T$ coincide with the extensions to $B_T$ of the derivations of $B$ induced by $\mathbf{a}$.
\end{enumerate}
\end{prop}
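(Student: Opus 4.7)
The plan is to define $\mathbf{a}_T=\{\alpha_0^T,\alpha_1^T,\ldots\}$, where each $\alpha_r^T$ is the unique continuous extension of $\alpha_r$ to $Q_T(A)$, and then verify the three axioms of a source of derivations using the \emph{same} pair of functions $\theta,\theta_1$. With the functions reused, axiom (iii) is automatic, and only (i) and (ii) require actual work.

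For the existence of $\alpha_r^T$, note that axiom (i) for $\mathbf{a}$ forces $\gr\alpha_r(s)=\gr s$ for every $s\in A$, and hence $\alpha_r(S)\subseteq S$ where $S=\{a\in A:\gr a\in T\}$. Therefore $\alpha_r$ extends uniquely to a ring endomorphism of the Ore localisation $A_S$ via $as^{-1}\mapsto \alpha_r(a)\alpha_r(s)^{-1}$. The identity
\[(\alpha_r-1)(as^{-1})=(\alpha_r(a)-a)\alpha_r(s)^{-1}-a\alpha_r(s)^{-1}(\alpha_r(s)-s)s^{-1},\]
combined with axiom (i) for $\mathbf{a}$ and the fact that $s^{-1},\alpha_r(s)^{-1}\in F_{-n}A_S$ when $\gr s$ has degree $n$, shows that this extension decreases the filtration of $A_S$ by at least $\theta(\alpha_r)$. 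In particular it is continuous, so it extends uniquely to an endomorphism $\alpha_r^T$ of the completion $Q_T(A)$, and the estimate persists by continuity and completeness of the filtration; this is precisely axiom (i) for $\mathbf{a}_T$. Running the same calculation inside $(A_1)_{S_1}$, but invoking axiom (ii) for $\mathbf{a}$ in place of (i), and then passing to $Q_{T_1}(A_1)$, yields axiom (ii) for $\mathbf{a}_T$; here we use Lemma \ref{MicLoc}(b) to identify $Q_{T_1}(A_1)$ as a filtered subring of $Q_T(A)=Q_{T_1}(A)$ so that condition (ii) of the definition makes sense.

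For part (b), the derivation $d_{\alpha_r^T}$ of $B_T$ constructed in \S\ref{Source} restricts to $d_{\alpha_r}$ on the subring $B\subseteq B_T$, simply because $\alpha_r^T|_A=\alpha_r$. Any graded derivation of $B_T$ is determined by its restriction to $B$ via the quotient rule $d(s^{-1})=-s^{-2}d(s)$ (recall $B$, and hence $B_T$, is commutative by the Frobenius pair axioms), so $d_{\alpha_r^T}$ agrees with the unique derivation of $B_T$ extending $d_{\alpha_r}$. The main obstacle is bookkeeping with the filtrations: one must verify the decrease estimates at each stage (endomorphism of $A_S$, its continuous extension to the completion, and analogously for the $A_1$-side), and check that these three filtrations are compatible in the way required by the definition. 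Once the displayed identity above is in hand, everything else follows by continuity and uniqueness.
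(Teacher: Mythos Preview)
Your proof is correct and follows essentially the same approach as the paper: extend each $\alpha_r$ to $A_S$ using $\alpha_r(S)\subseteq S$, verify the filtration drop via an explicit formula for $(\alpha_r-1)(as^{-1})$, pass to the completion, and handle axiom (ii) analogously; the paper simply declares part (b) ``clear'' where you spell out the quotient-rule argument. Your displayed identity is a harmless variant of the paper's (which has $-rs^{-1}(\alpha(s)-s)\alpha(s)^{-1}$ in the second term instead), and the phrase ``inside $(A_1)_{S_1}$'' is slightly imprecise since axiom (ii) lands in $A$ rather than $A_1$, so the estimate really lives in $A_S$---but you immediately acknowledge the need to embed $Q_{T_1}(A_1)$ in $Q_T(A)$, so this is not a gap.
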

\begin{proof}
Let $\alpha$ be a ring endomorphism of $A$ such that $(\alpha - 1)(F_nA) \subseteq F_{n - m}A$ for some integer $m\geq 1$, for all $n\in\mathbb{Z}$. Let $x \in A$; since $\deg (\alpha(x) - x) < \deg x$, we have $\gr \alpha(x) = \gr x$ and in particular $\deg \alpha(x) = \deg x$. Hence $\alpha(S) \subseteq S$, where $S$ denotes the Ore subset of $A$ determined by $T$, see $\S\ref{MicLoc}$. Thus $\alpha$ extends to an endomorphism $\alpha$ of the Ore localisation $A_S$. 

Now if $r \in A$ and $s \in S$, then the formula
\[\alpha(rs^{-1}) - rs^{-1} = (\alpha(r) - r)\alpha(s)^{-1} - rs^{-1}(\alpha(s) - s)\alpha(s)^{-1}\]
together with the explicit description of the filtration on $A_S$ given in \cite[Lemma 4.2]{AWZ1} shows that 
\[(\alpha - 1)(F_nA_S) \subseteq F_{n-m}A_S\]
for all $n \in \mathbb{Z}$. Because $m \geq 1$, it follows from this that $\alpha$ preserves the filtration on $A_S$ and hence extends to a ring endomorphism $\alpha$ of the completion $Q_T(A)$ such that $(\alpha - 1)F_nQ_T(A) \subseteq F_{n-m}Q_T(A)$ for all $n \in \mathbb{Z}$. Similarly, if $(\alpha - 1)F_nA_1 \subseteq F_{n - m}A$ then $(\alpha - 1)F_nQ_T(A_1) \subseteq F_{n - m}Q_T(A)$. Part (a) follows, and part (b) is clear.
\end{proof}

\subsection{The delta function}
\label{DeltaFunction} Let $(A,A_1)$ be a Frobenius pair and $n$ be
an integer. Each filtered part $F_nA_1$ is closed in $A_1$ by
definition of the filtration topology, and $A_1$ is closed in $A$ by
assumption. Hence $F_nA_1$ is closed in $A$, which can be expressed
as follows:
\[F_nA_1 = \bigcap_{m\geq 0} \left(F_nA_1 + F_{n-m}A\right).\]

We can now define a key invariant of elements of $A \backslash A_1$:

\begin{defn}
For any $w \in A\backslash A_1$, let $n = \deg w$ and define
\[\delta(w) := \max\{m : w \in F_nA_1 + F_{n-m}A\} .\]
\end{defn}

Clearly $\delta(w)\geq 0$. Note that if $w\in F_nA \backslash A_1$,
then $w \notin F_nA_1 + F_{n-m}A$ for some $m\geq 0$ by the above
remarks, so the definition makes sense. The number $\delta(w)$ measures how closely the element $w$ can be
approximated by elements of $A_1$. It should be remarked that
$\delta(w) > 0$ if and only if $\gr w \in B_1$, since both
conditions are equivalent to $w \in F_nA_1 + F_{n-1}A$.

Now suppose that $w\in A \backslash A_1$. By the definition of $\delta$,
we can find elements $x\in F_nA_1$ and $y\in F_{n-\delta}A$ such
that $w = x + y$; if
$\delta = 0$ we take $x$ to be zero. Note that $y \notin F_{n - \delta - 1}A$
by the maximality of $\delta$ and hence
\[Y_w := \gr y = y + F_{n - \delta - 1}A.
\]
In view of our assumption on $x$, we have $Y_w = \gr w$ when $\delta
= 0$.

\subsection{$\mathcal{S}$-closures}
\label{Star}
Let $(A,A_1)$ be a Frobenius pair and let $\mathcal{S}$ be a fixed set of sources of derivations of $A$. If $I$ is a right ideal of $A$, we say that $I$ is \emph{$\mathcal{S}$-invariant} if for all $\mathbf{a}\in\mathcal{S}$, $\alpha_r(I) \subseteq I$ for all $r \gg 0$.

\begin{defn} Let $(A,A_1)$ be a Frobenius pair, let $\mathcal{S}$ be a set of sources of derivations of $A$ and let $J$ be a graded ideal of $B$. The \emph{$\mathcal{S}$-closure} $J^\mathcal{S}$ of $J$ is defined to be
\[J^{\mathcal{S}} := \{ Y \in B : \quad \forall \mathbf{a} \in \mathcal{S}, \quad d_{\alpha_r}(Y) \in J \quad\mbox{for all}\quad r \gg 0.\}\]
\end{defn}

Because $d_{\alpha_r}$ is a $B_1$-linear derivation of $B$ for large enough $r$, we see that $J^{\mathcal{S}}$ is a $B_1$-submodule of $B$ containing $B_1$. It is in fact a graded $B_1$-submodule.

\begin{prop} Let $I$ be an $\mathcal{S}$-invariant right ideal of $A$. Then for any $w \in I \backslash A_1$, $Y_w \in J^{\mathcal{S}}$.
\end{prop}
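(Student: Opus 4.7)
The plan is to exploit the decomposition of $w$ provided by the definition of $\delta(w)$ and to identify the leading form of $\alpha_r(w) - w$ with $d_{\alpha_r}(Y_w)$ once $r$ is sufficiently large. Setting $n = \deg w$ and $\delta = \delta(w)$, I would invoke $\S\ref{DeltaFunction}$ to write $w = x + y$ with $x \in F_n A_1$ and $y \in F_{n - \delta}A$ (and $x = 0$ when $\delta = 0$), so that $Y_w = y + F_{n - \delta - 1}A$. The goal, unpacking the definition of $\mathcal{S}$-closure, is to show that for each $\mathbf{a} = \{\alpha_0, \alpha_1, \ldots\} \in \mathcal{S}$ the element $d_{\alpha_r}(Y_w)$ lies in $J$ for all $r \gg 0$.

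The $\mathcal{S}$-invariance of $I$ produces the input $\alpha_r(w) - w \in I$ for $r \gg 0$. The key computation now uses conditions (i)--(iii) of a source of derivations in tandem. By (ii), $(\alpha_r - 1)(x) \in F_{n - \theta_1(\alpha_r)}A$; by (i), $(\alpha_r - 1)(y) \in F_{n - \delta - \theta(\alpha_r)}A$. Condition (iii) yields $\theta_1(\alpha_r) > \theta(\alpha_r) + \delta$ for all $r$ sufficiently large, so the $x$-contribution sits strictly deeper in the filtration than the $y$-contribution. Adding these two relations gives
\[
\alpha_r(w) - w \;\equiv\; (\alpha_r - 1)(y) \pmod{F_{n - \delta - \theta(\alpha_r) - 1} A},
\]
and the image of the right-hand side in $F_{n - \delta - \theta(\alpha_r)} A / F_{n - \delta - \theta(\alpha_r) - 1} A$ is, by the very construction of $d_{\alpha_r}$ in $\S\ref{Source}$, equal to $d_{\alpha_r}(Y_w)$. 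Consequently $d_{\alpha_r}(Y_w)$ is precisely the leading form of the element $\alpha_r(w) - w \in I$, and therefore lies in the graded ideal $J$ of $B$ associated to $I$ in the intended sense. In the degenerate case $(\alpha_r - 1)(y) \in F_{n - \delta - \theta(\alpha_r) - 1} A$, the derivation $d_{\alpha_r}(Y_w)$ vanishes and is trivially in $J$.

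There is no genuine obstacle; the whole argument is a careful bookkeeping of filtration degrees. The substantive point is that axiom (iii) of a source of derivations is exactly what is needed to force the $A_1$-part $x$ of the decomposition to contribute only to lower-order filtration terms, thereby isolating the $y$-part and exposing $d_{\alpha_r}(Y_w)$ as the leading form of an element of $I$. The role of $\delta$ is to quantify how far into the filtration of $A$ the ``non-$A_1$ part'' of $w$ lives, which in turn dictates how large $r$ must be for (iii) to kick in.
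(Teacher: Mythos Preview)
Your proposal is correct and follows essentially the same route as the paper: decompose $w = x + y$ using $\delta(w)$, use axioms (i)--(iii) to show that for $r$ large the $x$-contribution to $\alpha_r(w)-w$ drops below filtration level $n-\delta-\theta(\alpha_r)-1$, and then identify the image of $\alpha_r(w)-w$ in degree $n-\delta-\theta(\alpha_r)$ with $d_{\alpha_r}(Y_w)$, which therefore lies in $J=\gr I$. Your explicit handling of the degenerate case $d_{\alpha_r}(Y_w)=0$ is a nice touch that the paper leaves implicit.
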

\begin{proof} Let us write $w = x + y$ as in the previous subsection and let $\mathbf{a} \in \mathcal{S}$. We can find an integer $r_0\geq 1$ such that $\theta_1(\alpha_r) - \theta(\alpha_r) > \delta := \delta(w)$
for all $r \geq r_0$. Therefore
\[\begin{aligned}
\alpha_r(x) - x &\in F_{n - \theta_1(\alpha_r)}A \subseteq
F_{n - \delta - \theta(\alpha_r) - 1}A \quad\mbox{and} \\
\alpha_r(y) - y \; &\in F_{n - \delta - \theta(\alpha_r)}A,
\end{aligned}
\]
for all $r \geq r_0$. Hence
$$\begin{aligned}
\alpha_r(w) - w \; &\in F_{n - \delta - \theta(\alpha_r)}A,
\quad \mbox{and} \\
\alpha_r(w) - w \; & \equiv \alpha_r(y) - y \mod
F_{n - \delta - \theta(\alpha_r) - 1}A
\end{aligned}
$$
for all $r\geq r_0$. We can rewrite the above as follows:
\[\alpha_r(w) - w  + F_{n-\delta-\theta(\alpha_r) - 1}A =
\alpha_r(y)-y + F_{n-\delta-\theta(\alpha_r) - 1}A
= d_{\alpha_r}(Y_w)\]
for $r \geq r_0$. Since $w \in I$ and $I$ is $\mathcal{S}$-invariant,
$d_{\alpha_r}(Y_w)$ must lie in the ideal $J = \gr I$ of $B$ for $r \gg 0$, and
hence $Y_w \in J^{\mathcal{S}}$ as required.
\end{proof}

Let $\mathcal{D}$ denote the set of all $B_1$-linear derivations of $B$.

\begin{cor}
Suppose that $\mathcal{D}(J^{\mathcal{S}}) \subseteq J$. Then $J$ is controlled by $B_1$: $J = (J\cap B_1)B$.
\end{cor}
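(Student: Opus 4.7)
The plan is to argue by minimal counterexample, exploiting the direct sum decomposition $B = \bigoplus_{\alpha \in [p-1]^t} B_1 \mathbf{y}^\alpha$ to reduce the problem to a descent on the ``$y$-degree'' of an element. Set $N := (J \cap B_1)B = \bigoplus_\alpha (J \cap B_1) \mathbf{y}^\alpha$, which is a graded $B_1$-submodule of $B$, and for $b = \sum_\alpha b_\alpha \mathbf{y}^\alpha \in B$ with $b_\alpha \in B_1$ define the $y$-degree of $b$ to be $\max\{|\alpha| : b_\alpha \ne 0\}$, where $|\alpha| := \sum_i \alpha_i$. Assume for contradiction $J \not\subseteq N$, and pick a homogeneous $b \in J \setminus N$ of minimal $y$-degree $d$. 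If $d = 0$ then $b \in B_1 \cap J \subseteq N$, a contradiction, so $d \ge 1$.

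The first substantive step is to verify that $b \in J^{\mathcal{S}}$. Since $J = \gr I$ and $b$ is a nonzero homogeneous element of $J$ of some degree $n$, there exists $w \in I$ with $\deg w = n$ and $b = \gr w$. If either $w \in A_1$ or $w \notin A_1$ with $\delta(w) > 0$, then the decomposition $w = x + y$ (with $x \in F_n A_1$ and $y$ of strictly smaller degree) gives $b = \gr w = \gr x \in B_1$, contradicting $d \ge 1$. The only remaining possibility is $w \notin A_1$ and $\delta(w) = 0$; then $b = Y_w$, which lies in $J^{\mathcal{S}}$ by the preceding Proposition.

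The second step exploits characteristic $p$. Setting $c_i := y_i^p \in B_1$, axioms (iii) and (iv) of a Frobenius pair yield a $B_1$-algebra isomorphism $B \cong B_1[Y_1, \ldots, Y_t]/(Y_1^p - c_1, \ldots, Y_t^p - c_t)$, and since $pY_i^{p-1} = 0$ the formal partial $\partial / \partial Y_i$ descends to a $B_1$-linear derivation $\partial_i \in \mathcal{D}$ of $B$ with $\partial_i(y_j) = \delta_{ij}$. The hypothesis $\mathcal{D}(J^{\mathcal{S}}) \subseteq J$ now yields $\partial_i(b) \in J$ for every $i$. Since $\partial_i(b) = \sum_{\alpha : \alpha_i > 0} \alpha_i b_\alpha \mathbf{y}^{\alpha - e_i}$ is homogeneous (of filtration degree $\deg b - \deg y_i$) of $y$-degree at most $d - 1$, minimality of $b$ forces $\partial_i(b) \in N$. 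Comparing $\mathbf{y}^\beta$-coefficients in the decomposition $N = \bigoplus_\beta (J \cap B_1) \mathbf{y}^\beta$ and using that each $\alpha_i \in \{1, \ldots, p-1\}$ is a unit in $\Fp$, we conclude $b_\alpha \in J \cap B_1$ for every $\alpha \ne 0$. Then $b_0 = b - \sum_{\alpha \ne 0} b_\alpha \mathbf{y}^\alpha$ is also in $J \cap B_1$, so $b \in N$, contradicting $b \notin N$.

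The main obstacle is the middle step, where we verify $b \in J^{\mathcal{S}}$: this is where the approximation invariant $\delta(w)$ is used, together with the preceding Proposition, to convert an a priori statement about the ``reduced'' element $Y_w$ into one about the full leading term $\gr w$. Once $b \in J^{\mathcal{S}}$ is in hand, the rest is a direct characteristic-$p$ descent on the free $B_1$-module structure of $B$.
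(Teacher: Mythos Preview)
Your argument is correct. The key step---showing that a homogeneous $b\in J\setminus B_1$ lifts to some $w\in I$ with $\delta(w)=0$ and hence $b=Y_w\in J^{\mathcal{S}}$---is exactly what the paper does. The paper then observes that this gives $\mathcal{D}(J)\subseteq J$ (since $\mathcal{D}$ kills $B_1$) and simply invokes \cite[Proposition~2.4(d)]{AWZ1} to conclude $J=(J\cap B_1)B$. You instead organise the proof as a descent on $y$-degree and explicitly construct the $B_1$-linear partial derivations $\partial_i$ via the presentation $B\cong B_1[Y_1,\ldots,Y_t]/(Y_i^p-c_i)$; this is essentially a self-contained re-proof of the cited proposition woven into the main argument. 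Both routes have the same mathematical content---your version is more elementary and avoids the external citation, while the paper's is shorter by outsourcing the descent step.
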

\begin{proof} By \cite[Proposition 2.4(d)]{AWZ1}, it is enough to show that $\mathcal{D}(J) \subseteq J$. So let $X \in J$ be a homogeneous element. If $X \in B_1$ then $\mathcal{D}(X) = 0 \in J$, so assume $X \notin B_1$. Choose $w \in I$ such that $X = \gr w$; then $\delta(w) = 0$ by definition and $X = Y_w$. Hence $X \in J^{\mathcal{S}}$ by the Proposition and hence $\mathcal{D}(X) \subseteq J$ by the assumption on $J$. The result follows.
\end{proof}

\subsection{The control theorem}
\label{Control}
We can now state and prove our first main result, a control theorem for $\mathcal{S}$-invariant ideals. It should be viewed as a generalization of the control theorem for normal elements \cite[Theorem 3.1]{AWZ1}.
\begin{thm}
Let $(A,A_1)$ be a Frobenius pair, let $\mathcal{S}$ be a set of sources of derivations, let $I$ be a $\mathcal{S}$-invariant right ideal of $A$ and let $J := \gr I$. If $\mathcal{D}(J^{\mathcal{S}}) \subseteq J$ then $I$ is controlled by $A_1$:
\[I = (I \cap A_1)\cdot A.\]
\end{thm}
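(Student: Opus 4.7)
The strategy is to leverage the Corollary at the end of \S\ref{Star}, which under the hypothesis $\mathcal{D}(J^\mathcal{S}) \subseteq J$ already gives the graded-level control $J = (J \cap B_1) B$; the remaining task is to promote this to filtered control $I = I_1 \cdot A$, where $I_1 := I \cap A_1$.

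I would proceed by establishing the auxiliary claim $J \cap B_1 \subseteq \gr I_1$. Combined with $J = (J \cap B_1) B$, this yields $\gr I = \gr(I_1 \cdot A)$, and a completeness-plus-closedness argument then finishes the job: given $w \in I$ of degree $n$, write $\gr w = \sum_i Y_i Z_i$ with homogeneous $Y_i \in J \cap B_1$ and $Z_i \in B$, lift each $Y_i$ to $\tilde Y_i \in I_1$ via the auxiliary claim and each $Z_i$ to $\tilde Z_i \in A$, and set $v_0 := \sum_i \tilde Y_i \tilde Z_i \in I_1 \cdot A$, so $w - v_0 \in F_{n-1}A \cap I$; iterating produces $v_0, v_1, v_2, \ldots \in I_1 \cdot A$ with $\deg v_k \to -\infty$, and the partial sums form a Cauchy sequence converging to $w$ in the complete ring $A$. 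Closedness of $I_1 \cdot A$ follows from the Frobenius-pair structure: by \cite[Lemma 2.3]{AWZ1} we have $A = \bigoplus_\alpha A_1 \tilde{\mathbf{y}}^\alpha$ as a free right $A_1$-module, so $I_1 \cdot A = \bigoplus_\alpha I_1 \tilde{\mathbf{y}}^\alpha$ is a finite direct sum of closed subsets of $A$, hence closed. This forces $w \in I_1 \cdot A$, giving $I = I_1 \cdot A$.

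The main obstacle is the auxiliary claim $J \cap B_1 \subseteq \gr I_1$. Given a homogeneous $X \in J \cap B_1$ of degree $n$, I would start with any lift $w \in I$ with $\gr w = X$; since $X \in B_1$ one has $\delta(w) \geq 1$, and writing $w = x + y$ in the canonical way of \S\ref{DeltaFunction}, the Proposition in \S\ref{Star} places $Y_w = \gr y \in J^\mathcal{S}$, whence $\mathcal{D}(Y_w) \subseteq J$ by hypothesis. The plan is to run a $\delta$-induction analogous to the one in \cite[Theorem 3.1]{AWZ1}: iteratively replace $w$ by some $w' \in I$ of the same degree with $\gr w' = X$ and strictly larger $\delta(w')$, the correction at each stage being produced from the derivation hypothesis applied to the current defect $Y_w$. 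Because $\bigcap_m (F_n A_1 + F_{n-m}A) = F_n A_1$ by the remarks in \S\ref{DeltaFunction}, the sequence produced by this iteration is Cauchy with limit in $F_n A_1$; being a limit of elements of $I$, it lies in $I_1$, and its leading term is $X$ as required.

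The technically delicate point — and the place where the full strength of the $\mathcal{S}$-closure hypothesis $\mathcal{D}(J^\mathcal{S}) \subseteq J$ is used rather than merely $\mathcal{D}(J) \subseteq J$ — is arranging the $\delta$-increasing correction at each stage so that it lies in $I$, given that the defect $Y_w$ is only a priori known to lie in $J^\mathcal{S}$ and not in $J$ itself. This is precisely what forces the elaborate definition of $\mathcal{S}$-closure in \S\ref{Star}, and I would expect the details here to parallel the normal-element case closely, with the set of derivations $d_{\alpha_r}$ coming from a source $\mathbf{a} \in \mathcal{S}$ replacing the single inner derivation used there.
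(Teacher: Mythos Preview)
Your proposal is correct and matches the paper's proof essentially step for step: the $\delta$-induction establishing $J \cap B_1 \subseteq \gr I_1$, followed by the graded-to-filtered lift (the paper simply cites \cite[Chapter II, Lemma 1.2.9]{LV} for the latter rather than writing out your Cauchy argument). The point you flag as delicate is resolved exactly as in the normal-element argument you invoke: once $\mathcal{D}(Y_w) \subseteq J$, the already-known control $J = (J \cap B_1)B$ combined with \cite[Proposition 2.4(c)]{AWZ1} applied to the image of $Y_w$ in $B/J$ gives $Y_w \in J + B_1$, and lifting this decomposition as $y = s + z + \epsilon$ with $s \in I \cap F_{n-\delta}A$, $z \in F_{n-\delta}A_1$, $\epsilon \in F_{n-\delta-1}A$ produces the correction $w' := w - s \in I$ with $\delta(w') > \delta(w)$ --- the derivations $d_{\alpha_r}$ are not used again after placing $Y_w$ in $J^{\mathcal{S}}$.
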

\begin{proof}
We will first show that $J\cap B_1 \subseteq \gr (I \cap A_1)$. Let $X \in J \cap B_1$ be homogeneous of degree $n$ say, and choose $w \in I$ such that $\gr w = X$. If $w \in A_1$ then $X = \gr w \in \gr (I \cap A_1)$ as required, so assume that $w \notin A_1$. Write $w = x + y$ as in $\S\ref{DeltaFunction}$;  by Proposition \ref{Star}, $Y := Y_w = \gr y \in J^{\mathcal{S}}$, so $\mathcal{D}(Y) \subseteq J$ by assumption on $J$.

Since $J$ is controlled by $B_1$ by Corollary \ref{Star}, applying \cite[Proposition 2.4(c)]{AWZ1} to the image of $Y$ in $B/J$ shows that $Y \in J + B_1$. 

Write $\delta = \delta(w)$, so that $\deg y = n - \delta$. Note that $\delta > 0$ because $X \in B_1$. We can find some 
$s \in I\cap F_{n-\delta}A$, $z \in F_{n-\delta}A_1$ and $\epsilon \in F_{n-\delta - 1}A$ such that $y = s + z + \epsilon$. Then $w' := w - s \in I$ and $\gr w' = \gr w = X$ because $s \in F_{n-\delta}A$ and $\delta > 0$. Moreover, $w' = x + z + \epsilon \in F_nA_1 + F_{n - \delta - 1}A$, so that $\delta(w') > \delta(w)$.

Iterating the above argument, we can construct a sequence $w_1, w_2, w_3, \ldots$ of elements of $I$ having the following properties:
\begin{itemize}
\item $\gr w_i = X$,
\item $w_i \notin A_1$,
\item $w_{i+1} \equiv w_i \mod F_{n - \delta(w_i)}A$, and 
\item $\delta(w_{i+1}) > \delta(w_i)$
\end{itemize}
for all $i\geq 1$. Note that we may always assume that $w_i \notin A_1$, because if any $w_i$ does happen to lie in $A_1$ then $X = \gr w_i \in \gr (I \cap A_1)$ and we're done.

This sequence converges to an element $u \in A$ such that $\gr u = X$. Since the filtration on $A$ is complete and since $B = \gr A$ is noetherian, $I$ is closed in the filtration topology by \cite[Chapter II, Theorem 2.1.2(6)]{LV} so $u \in I$. Given an integer $m > 0$, $\delta(w_i) > m$ and $u - w_i \in F_{n-m}A$ for sufficiently large $i$, so 
\[u  = w_i + (u-w_i) \in (F_nA_1 + F_{n-\delta(w_i)}A) + F_{n-m}A \subseteq F_nA_1 + F_{n - m}A\]
for all $m>0$. Because $A_1$ is closed in $A$, $u \in A_1$ and therefore $X = \gr u \in \gr(I \cap A_1)$. 

Thus $J\cap B_1 \subseteq \gr (I \cap A_1)$ as claimed. Because $\gr (I\cap A_1)$ is obviously contained in $J \cap B_1$, we have the equality $J \cap B_1 = \gr (I\cap A_1)$. Now 
\[\gr((I\cap A_1)A) = \gr(I\cap A_1)\cdot \gr A = (J\cap B_1)\cdot B = J = \gr I\]
and therefore $I = (I\cap A_1)A$ by \cite[Chapter II, Lemma 1.2.9]{LV}. 
\end{proof}

\section{Iwasawa algebras}
\label{IwaAlgs}
\subsection{Uniform $\Gamma$-actions}
\label{UnifAct}
Let $p$ be an odd prime and let $\Gamma$ and $G$ be uniform pro-$p$ groups. We assume that $\Gamma$ acts on $G$ by group automorphisms and that the action is \emph{uniform}:
\[\gamma\cdot g \equiv g \mod G^p\]
for all $\gamma \in \Gamma$ and $g \in G$. Let $\tau : \Gamma \to \Aut(G)$ be the associated group homomorphism. Let $L_G$ denote the $\Zp$-Lie algebra of $G$ --- this is a free $\Zp$-module of rank $d = \dim G$. Any automorphism of $G$ gives rise to an automorphism of $L_G$: this gives rise to a natural injection 
\[\iota : \Aut(G) \hookrightarrow \GL(L_G).\]
Clearly $\Gamma$ acts uniformly on $G$ if and only if the image of $\iota\tau$ is contained in the first congruence subgroup $\Gamma_1(\GL(L_G)) := \ker(\GL(L_G) \to \GL(L_G/pL_G))$ of $\GL(L_G)$. Since $\Gamma_1(\GL(L_G))$ has finite index in $\GL(L_G)$, we see that if $\Gamma$ is any pro-$p$ group of finite rank acting on $G$ by group automorphisms, then $\Gamma$ always has a uniform pro-$p$ subgroup $\Gamma_1$ of finite index that acts uniformly. 

\subsection{Some Lie theory} \label{Normaliser} The category of uniform pro-$p$ groups is isomorphic to the category of powerful Lie algebras by \cite[Theorem 9.10]{DDMS}, so the homomorphism $\iota \tau$ gives rise to a Lie algebra homomorphism
\[\sigma = \log \circ \iota\tau \circ \exp : L_\Gamma \to p\End_{\Zp}(L_G)\]
since $p\End_{\Zp}(L_G)$ is the $\Zp$-Lie algebra of $\Gamma_1(\GL(L_G))$. In other words, $L_G$ is naturally a $L_\Gamma$-module, acting by derivations and moreover
\[x\cdot L_G \subseteq pL_G \quad\mbox{for all}\quad x \in L_\Gamma.\]
Let $N_\Gamma = \{x \in \Qp L_\Gamma : x\cdot L_G \subseteq L_G\}$ be the inverse image of $\End_{\Zp}(L_G)$ under the homomorphism
\[\sigma: \Qp L_\Gamma \to \End_{\Qp}(\Qp L_G).\]
Note that $\frac{1}{p}L_\Gamma \subseteq N_\Gamma$. $N_\Gamma$ also contains $\ker \sigma$ and $N_\Gamma / \ker \sigma$ is a finitely generated $\Zp$-module. Hence 
\[\mathfrak{g} := N_\Gamma/pN_\Gamma\]
is a finite dimensional $\Fp$-Lie algebra. Define
\[V := L_G/pL_G,\]
an $\Fp$-vector space of dimension $d$. Letting $\overline{-} : L_G \twoheadrightarrow V$ and $\overline{-}:N_\Gamma \twoheadrightarrow \mathfrak{g}$ denote the natural surjections, $V$ becomes a $\mathfrak{g}$-module via the rule
\[\overline{x}\cdot\overline{y} = \overline{x\cdot y}\]
for all $x\in N_\Gamma$ and $y\in L_G$. Let $\rho : \mathfrak{g} \to \End (V)$ be the associated homomorphism.

\subsection{The derivation hypothesis} 
\label{DerHyp}
As explained in \cite[\S 1.3]{AWZ2}, every endomorphism $\varphi$ of $V$ extends to a derivation of $\Sym(V)$ and for each $r\geq 0$ we also have the deformed derivations $\varphi^{[p^r]}$ of $\Sym(V)$, defined by the rule
\[\varphi^{[p^r]}(v) = \varphi(v)^{p^r} \quad\mbox{for all}\quad v \in V.\]
Let $B = \Sym(V\otimes k)$ and let $P$ be a graded prime ideal of $B$. Let $\mathcal{D} = \Der_k(B)$ denote the space of all $k$-linear derivations of $B$; then $\mathcal{D}$ is canonically isomorphic to $B \otimes_{\Fp} V^\ast$ and $\Der_k(B_P)$ is canonically isomorphic to $B_P \otimes_{\Fp} V^\ast = \mathcal{D}_P$ (see $\S\ref{Ch}$ for the notation). The derivations $\varphi^{[p^r]}$ extend to $B_P$, so we can also think of them as lying in $\mathcal{D}_P$.

We can view $V$ as a $\mathfrak{g}$-module via $\rho$; in this way, $V^\ast$ is also naturally a $\mathfrak{g}$-module. The next result gives a sufficient condition that ensures that a local analogue of the derivation hypothesis of \cite[\S 3.5]{AWZ1} holds.

\begin{prop} Let $P$ be a graded prime ideal of $B$ which does not contain $\mathfrak{g}.v$ for any $v \in V \backslash 0$. Let $J$ be a graded ideal of $B_P$ and let $Y \in B_P$ be such that for all $x \in \mathfrak{g}$, we have
\[ \rho(x)^{[p^r]}(Y) \in J \quad\mbox{for all}\quad  r \gg 0.\]
Then $\mathcal{D}_P(Y) \subseteq J$.
\end{prop}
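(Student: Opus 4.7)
The plan is to show that there exist $x_1, \ldots, x_d \in \mathfrak{g}$ (with $d = \dim_{\Fp} V$) and an integer $r_0$ such that (i) the derivations $\rho(x_1)^{[p^{r_0}]}, \ldots, \rho(x_d)^{[p^{r_0}]}$ generate $\mathcal{D}_P$ as a $B_P$-module, and (ii) each of them satisfies $\rho(x_k)^{[p^{r_0}]}(Y) \in J$. Granted (i) and (ii), every $D \in \mathcal{D}_P$ admits a finite expression $D = \sum_k b_k\, \rho(x_k)^{[p^{r_0}]}$ with $b_k \in B_P$, and hence $D(Y) = \sum_k b_k\, \rho(x_k)^{[p^{r_0}]}(Y) \in J$, proving $\mathcal{D}_P(Y) \subseteq J$.

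Condition (ii) is harmless: once the $x_k$ are fixed, take $r_0$ larger than the finitely many thresholds supplied by the hypothesis for $x_1, \ldots, x_d$. The substance is in (i). Choose a basis $v_1, \ldots, v_d$ of $V$ with dual basis $\{v_j^*\}$ and expand $\rho(x)^{[p^r]} = \sum_j (\rho(x)v_j)^{p^r}\, \partial_{v_j^*}$. Since $\mathcal{D}_P \cong B_P \otimes_{\Fp} V^*$ is free of rank $d$ over the local ring $B_P$ with maximal ideal $\mathfrak{m} = PB_P$ and residue field $\kappa$, Nakayama's lemma reduces (i) to the invertibility in $\kappa$ of the $d \times d$ matrix $\bigl(\overline{\rho(x_k)v_j}^{\,p^{r_0}}\bigr)_{k,j}$. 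Because in characteristic $p$ the determinant commutes with entrywise $p^{r_0}$-th powering, and $P$ is prime, this is equivalent to $\det(\rho(x_k)v_j)_{k,j} \notin P$; in particular, the invertibility question is independent of the choice of $r_0$.

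The task thus reduces to producing $x_1, \ldots, x_d \in \mathfrak{g}$ with $\det(\rho(x_k)v_j)_{k,j} \notin P$, and this is the main obstacle. Here I would invoke the linear-algebra calculations of \cite[\S 1]{AWZ2} together with the hypothesis on $P$. Heuristically, if every such determinant were in $P$, then the family $\{(\overline{\rho(x)v_j})_j : x \in \mathfrak{g}\} \subseteq \kappa^d$ would span a proper $\kappa$-subspace, and a nonzero vector in its annihilator --- after a descent argument exploiting the $\Fp$-rationality of the matrices of $\rho$ --- would yield a nonzero $v \in V$ with $\mathfrak{g} \cdot v \subseteq P$, contradicting the standing hypothesis. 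Once the tuple $x_1, \ldots, x_d$ is produced, choosing $r_0$ to dominate the relevant thresholds completes the proof as outlined in the first paragraph.
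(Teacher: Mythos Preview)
Your approach has a genuine gap: the determinant $\det(\rho(x_k)v_j)_{k,j}$ can vanish identically in $B$, no matter how the $x_k$ are chosen. Since the $k$-th row of this matrix depends $\Fp$-linearly on $\rho(x_k)$, the rows are $\Fp$-linearly dependent whenever $\dim_{\Fp}\rho(\mathfrak{g}) < d$, and then the determinant is zero as a polynomial. For a concrete instance, take $V=\Fp^2$ and let $\mathfrak{g}$ be one-dimensional, acting on $V$ through the identity. Then $\mathfrak{g}\cdot v=\Fp v$ for every $v$, so the hypothesis on $P$ reduces to $P\cap V=0$ (for example $P=0$); yet the matrix has rank at most one for every choice of $x_1,x_2$, and the determinant is zero. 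Your proposed descent step does not help here: the $\kappa$-annihilator of the row space is spanned by $(\overline{v_2},-\overline{v_1})$, which has no nonzero representative in $\Fp^2$ precisely because $P\cap V=0$. So the heuristic ``$\kappa$-annihilator nontrivial $\Rightarrow$ some $v\in V\setminus 0$ with $\mathfrak{g}\cdot v\subseteq P$'' is false in general.

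The paper circumvents this by varying $r$ rather than $x$. For a fixed $x$, the derivations $\rho(x)^{[p^r]}$ with several consecutive values of $r$ produce a Moore-type matrix whose determinant factors as a product of $\Fp$-linear forms lying in $\rho(x)\cdot V$; this is exactly what \cite[Proposition~1.4]{AWZ2} supplies. Choosing $f\in W:=(P\cap V)^\perp$ forces all such factors outside $P$, hence they are units in $B_P$, and one obtains $(x\cdot f)(Y)\in J$ for every $x\in\mathfrak{g}$ and $f\in W$. A short duality argument, using the hypothesis on $P$ once more, then shows $\mathfrak{g}\cdot W=V^\ast$, whence $\mathcal{D}_P(Y)\subseteq J$. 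Thus the linear algebra of \cite[\S1]{AWZ2} is indeed the right tool, but it combines different Frobenius twists of a \emph{single} $\rho(x)$ rather than different $x$'s at a single twist; this is what makes the unit a product of linear forms that the hypothesis on $P$ can control.
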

\begin{proof}
Fix $x \in \mathfrak{g}$ and $f \in V^\ast$. By \cite[Proposition 1.4]{AWZ2}, some $B$-linear combination of the derivations $\rho(x)^{[p^r]}$ equals $U (x\cdot f)$, where $U$ is some product of elements $u$, each lying in $x.V \backslash \ker f$. If $f(P\cap V) = 0$ then every such $u$ lies outside of $P$ and is hence a unit in $B_P$. Let $W = (P\cap V)^\perp$ be the annihilator of $P\cap V$ in $V^\ast$; then
\[(x\cdot f)(Y) \in J \quad\mbox{for all}\quad x \in \mathfrak{g}\quad\mbox{and all}\quad f \in W.\] 
Now if $\mathfrak{g} \cdot W  < V^\ast$ then there exists $v \in V\backslash 0$ such that $(\mathfrak{g} \cdot W)(v) = 0$. But then $W(\mathfrak{g}.v) = 0$, which forces $\mathfrak{g}.v \subseteq P \cap V$ and hence contradicts our assumption on $P$. 

Hence $\mathfrak{g} \cdot W = V^\ast$; however $\mathcal{D}_P$ is generated by $V^\ast$ as a $B_P$-module, so $\mathcal{D}_P(Y) \subseteq J$ as required.
\end{proof}

Note that for ``most" $P$, the intersection $P \cap V$ will be zero and then $P$ can only contain $\mathfrak{g}.v$ if $v$ lies in the space of $\mathfrak{g}$-invariants $V^{\mathfrak{g}}$ of $V$. Since $V^{\mathfrak{g}}$ can be arranged to be zero in many interesting cases, this means that the condition on $P$ imposed above is not very strong. 

\subsection{Derivations for Iwasawa algebras}\label{DersIwa}
Let $A = kG$ and $A_1 = kG^p$ be the completed group algebras of $G$ and $G^p$, with coefficients in our ground field $k$. By \cite[Theorem 6.6]{AWZ1}, $(A,A_1)$ is a Frobenius pair, and by \cite[Lemma 6.2(d) and Theorem 6.4]{AWZ1}, there is a canonical isomorphism
\[\Sym(V\otimes_{\Fp} k) \stackrel{\cong}{\longrightarrow} \gr A.\]
Compare the following result with \cite[Proposition 3.3]{AWZ2}.
\begin{prop} Let $x \in \mathfrak{g}$ be non-zero, and choose a lift $\tilde{x}$ of $x$ in $N_\Gamma\backslash pN_\Gamma$. Let $m\geq 1$ be such that $p^m\tilde{x} \in L_\Gamma$. Let $\alpha = \tau(\exp(p^m\tilde{x})) \in \Aut(G)$ and view $\alpha$ as an algebra endomorphism of $A = kG$. Then
\begin{enumerate}[{(}a{)}]
\item $(\alpha - 1)F_nA \subseteq F_{n - p^m + 1}A,$ for all $n \in \mathbb{Z}$,
\item $(\alpha - 1)F_nA_1 \subseteq F_{n - p^{m+1} + p}A$ for all $n \in \mathbb{Z}$, and
\item $d_\alpha = \rho(x)^{[p^m]}$ as derivations of $\gr A$.
\end{enumerate}
\end{prop}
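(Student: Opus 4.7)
The plan is to compute $\alpha(g) - g$ for $g \in G$ (and $\alpha(h) - h$ for $h \in G^p$) by realising $\alpha$ on $L_G$ via the $\log/\exp$ correspondence, then to extend from topological generators to the whole filtered ring using the identity $(\alpha - 1)(xy) = \alpha(x)(\alpha - 1)(y) + (\alpha - 1)(x)y$ together with continuity. The common starting point is that $\alpha = \tau(\exp(p^m \tilde{x}))$ combined with $\sigma = \log \circ \iota \tau \circ \exp$ forces
\[\iota(\alpha) = \exp(p^m \varphi) \in \Aut(L_G), \qquad \varphi := \sigma(\tilde{x}) \in \End_{\Zp}(L_G),\]
where $\varphi$ really takes values in $\End_{\Zp}(L_G)$ by the defining property of $N_\Gamma$.

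For part (a), I would write $g = \exp(y)$, so $\alpha(g) = \exp(y + \delta)$ with $\delta := (\exp(p^m\varphi) - 1)(y) \in p^m L_G$. The Baker--Campbell--Hausdorff formula, together with the uniform hypothesis $[L_G, L_G] \subseteq pL_G$, shows that every term of $\log(\alpha(g) g^{-1}) = \mathrm{BCH}(y + \delta, -y)$ lies in $p^m L_G$, so $\alpha(g)g^{-1} \in G^{p^m}$. Combined with the characteristic-$p$ identity $g^{p^m} - 1 = (g - 1)^{p^m}$ (which shows $G^{p^m} \subseteq 1 + F_{-p^m}A$), this places $\alpha(g) - g = (\alpha(g)g^{-1} - 1)g$ in $F_{-p^m}A$. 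The bound for general $F_n A$ then follows from the Leibniz identity and continuity, using that $\alpha$ preserves the filtration on $A$.

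For part (b), I would repeat the calculation with $h = g^p = \exp(py)$ in place of $g$, using $\exp(pz) = \exp(z)^p$ in a uniform pro-$p$ group to get $\alpha(h) = \exp(p(y + \delta))$. Now $\log(\alpha(h) h^{-1}) = \mathrm{BCH}(py + p\delta, -py)$; the linear part is $p\delta \in p^{m+1}L_G$, and a multilinear expansion of the higher BCH terms, using $[py, py] = 0$ together with $[L_G, L_G] \subseteq pL_G$, forces every one of them to contain a $p\delta$ factor and hence also to lie in $p^{m+1}L_G$. Hence $\alpha(h) - h \in F_{-p^{m+1}}A$, which matches the target bound since $h - 1 \in F_{-p}A$. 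The extension to all of $F_n A_1$ uses the same Leibniz argument, with the new ingredient being that $\alpha$ preserves $G^p$, so each factor $\alpha(h - 1)$ also lies in $F_{-p}A$ rather than just $F_{-1}A$; this is what supplies the extra $p$ in the filtration shift from (a) to (b).

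For part (c), both $d_\alpha$ and $\rho(x)^{[p^m]}$ are derivations of $\gr A = \Sym(V \otimes k)$ of the same degree, so it suffices to check agreement on the generators $V$. For a topological generator $g_i = \exp(y_i)$ with $\bar b_i := \gr(g_i - 1) \in V$, the computation in (a) shows $\alpha(g_i) \equiv g_i \cdot \exp(p^m \varphi(y_i))$ modulo one-step-deeper filtration. The characteristic-$p$ identity
\[\exp(p^m z) - 1 = (\exp(z) - 1)^{p^m} \quad \text{in } kG,\]
immediate from $\exp(p^m z) = \exp(z)^{p^m}$ together with $(u + 1)^{p^m} = u^{p^m} + 1$, then identifies $d_\alpha(\bar b_i)$ with the $p^m$-th power in $\gr A$ of the image of $\exp(\varphi(y_i)) - 1$ in $V$. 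Under the standard isomorphism $L_G/pL_G \cong V$ this image is precisely $\rho(x)(\bar b_i)$, so $d_\alpha(\bar b_i) = \rho(x)(\bar b_i)^{p^m} = \rho(x)^{[p^m]}(\bar b_i)$. The main obstacle throughout is the filtration bookkeeping in (b), where one must simultaneously confirm that the BCH series stays in $p^{m+1}L_G$ and that the Leibniz rule on the subspace filtration of $A_1 = kG^p$ yields the promised extra $p$; the essential mechanism is simply that $h - 1 \in F_{-p}A$ rather than $F_{-1}A$ for $h \in G^p$.
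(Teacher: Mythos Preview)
Your proposal is correct, but it takes a genuinely different route from the paper. The paper avoids direct BCH and Leibniz computations altogether: it forms the semidirect product $H = G \rtimes C$ with $C = \overline{\langle \gamma \rangle}$, $\gamma = \exp(p^m\tilde{x})$, observes that inside $kH$ the operator $(\alpha - 1)$ on $kG$ is realised as $b \mapsto [\gamma, b]\gamma^{-1}$, and then invokes \cite[Proposition 6.7]{AWZ1} (which already encodes the filtration bounds for commutators $[\gamma, -]$ on $kH$ and $kH^p$ coming from $[p^m\tilde{x}, L_H] \subseteq p^m L_H$) together with $F_n kH \cap kG = F_n kG$ to obtain (a) and (b) at once; part (c) is then read off from \cite[Theorem 6.8]{AWZ1}, identifying $d_\alpha$ with the restriction of the Poisson-type bracket $\{\gamma, -\}_{p^m - 1}$ on $\gr kH$. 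Your approach instead stays inside $G$ and $L_G$, computing $\alpha(g)g^{-1}$ and $\alpha(h)h^{-1}$ via BCH and then propagating to all of $F_nA$ and $F_nA_1$ by the Leibniz rule for $\alpha - 1$. This is more hands-on but entirely valid: the key points --- that every nonvanishing multilinear BCH term in $\mathrm{BCH}(y+\delta, -y)$ or $\mathrm{BCH}(p(y+\delta), -py)$ must contain a $\delta$-factor, that $\delta \in p^m L_G$, and that the Leibniz expansion on products of $(h-1)$'s with $h \in G^p$ contributes the extra factor of $p$ in (b) because each such factor already sits in $F_{-p}A$ --- are exactly right. The paper's approach is shorter because the hard work was already packaged in \cite{AWZ1}; yours is more self-contained and makes the mechanism visible without the auxiliary group $H$.
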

\begin{proof} Let $C$ be the procyclic subgroup of $\Gamma$ generated by $\gamma := \exp(p^m\tilde{x}) \in \Gamma$. 
Let $H$ be the semidirect product of $G$ and $C$ with $\gamma$ acting on $G$ by the automorphism $\alpha$. So inside this new group $H$, we have the relation
\[ \gamma g \gamma^{-1} = \gamma \cdot g\]
for all $g \in G$. The group $H$ is uniform and $L_H$ is the semidirect product of $L_G$ and $L_C = p^m\tilde{x}\Zp$, with $p^m\tilde{x}$ acting via the derivation $\sigma(p^m\tilde{x}) : L_G \to L_G$:
\[L_H = L_G \rtimes L_C.\]
Because $L_C$ is abelian and $\tilde{x} \cdot L_G \subseteq L_G$, $[p^m\tilde{x}, L_H] \subseteq p^m L_H$. Hence by \cite[Proposition 6.7]{AWZ1}, the following relations hold in $kH$ for all $n \in \mathbb{Z}$:
\[\begin{array}{ccc} [\gamma, F_nkH] &\subseteq &F_{n - p^m + 1}kH \\

[\gamma, F_nkH^p] &\subseteq &F_{n - p^{m+1} + p}kH.\end{array}\]
Now $(\alpha - 1)(b) = \gamma b \gamma^{-1} - b = [\gamma,b]\gamma^{-1}$ for all $b \in kG$ and
\[F_nkH \cap kG = F_nkG\]
for all $n\in\mathbb{Z}$. Parts (a) and (b) follow.

Finally, part (c) follows from \cite[Theorem 6.8]{AWZ1}: one only needs to note that $d_\alpha$ coincides with the restriction to $\gr kG$ of the derivation $\{\gamma,-\}_{p^m-1}$ of $\gr kH$. 
\end{proof}

\subsection{The support of the ``failure of control" module}
\label{SuppFail}We can now put the main pieces together and prove a refined version of \cite[Theorem 5.2]{AWZ1}. The theorem below places a severe restriction on the characteristic support of the failure of control module of any $\Gamma$-invariant right ideal of $kG$; see $\S\ref{Ch}$ for the notation.

\begin{thm} Let $I$ be a $\Gamma$-invariant right ideal of $kG$ and let $F = I / (I \cap kG^p)kG$ be the failure of control module. Then for any $P \in \Ch(F)$ there exists $v \in V\backslash 0$ such that $\mathfrak{g}.v\subseteq P$.
\end{thm}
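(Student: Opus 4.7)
My plan is to prove the contrapositive: fix $P \in \Spec_{\gr}(B)$ with $\mathfrak{g}\cdot v \not\subseteq P$ for every $v \in V \setminus 0$, and show that $F_P = 0$. Setting $T := T_P$, $A' := Q_T(kG)$, $A'_1 := Q_{T_1}(kG^p)$ and $I' := Q_T(I)$, the pair $(A', A'_1)$ is again a Frobenius pair by Lemma \ref{MicLoc}, its associated graded ring is $B' = B_P$, and by Lemma \ref{Unloc} it suffices to prove that $I'$ is controlled by $A'_1$. The strategy is to deduce this control property from the control theorem (Theorem \ref{Control}) applied to a suitable set of sources of derivations built from $\mathfrak{g}$.

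For each nonzero $x \in \mathfrak{g}$, choose a lift $\tilde{x} \in N_\Gamma \setminus pN_\Gamma$ and an integer $m_x \geq 1$ with $p^{m_x}\tilde{x} \in L_\Gamma$. Set $\gamma_x := \exp(p^{m_x}\tilde{x}) \in \Gamma$ and $\alpha^x_r := \tau(\gamma_x^{p^r}) \in \Aut(G)$ for $r \geq 0$. Applying Proposition \ref{DersIwa} with exponent $m_x + r$ in place of $m$ gives
\[(\alpha^x_r - 1)F_nA \subseteq F_{n - p^{m_x+r}+1}A, \quad (\alpha^x_r - 1)F_nA_1 \subseteq F_{n - p^{m_x+r+1}+p}A,\]
together with $d_{\alpha^x_r} = \rho(x)^{[p^{m_x+r}]}$. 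Since $p^{m_x+r}(p-1) - p + 1 \to \infty$ as $r \to \infty$, the family $\mathbf{a}_x := \{\alpha^x_r\}_{r\geq 0}$ is a source of derivations for $(A, A_1)$, and Proposition \ref{SourceDers} promotes it to a source $\mathbf{a}'_x$ for $(A', A'_1)$ whose induced derivations on $B'$ are the extensions of $\rho(x)^{[p^{m_x+r}]}$. Let $\mathcal{S} := \{\mathbf{a}'_x : x \in \mathfrak{g} \setminus 0\}$. Because each $\gamma_x^{p^r}$ lies in $\Gamma$ and $I$ is $\Gamma$-invariant, every $\alpha^x_r$ stabilises $I$, so $I'$ is $\mathcal{S}$-invariant.

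Now let $J := \gr I'$, a graded ideal of $B'$. If $Y \in J^{\mathcal{S}}$ then for every nonzero $x \in \mathfrak{g}$, $\rho(x)^{[p^{m_x+r}]}(Y) \in J$ for all $r \gg 0$; since $\{m_x + r : r \geq 0\}$ is cofinal in $\mathbb{N}$, this is equivalent to $\rho(x)^{[p^s]}(Y) \in J$ for all $s \gg 0$, and the same holds trivially for $x = 0$. By our assumption on $P$, Proposition \ref{DerHyp} yields $\Der_k(B')(Y) \subseteq J$. Since every $B'_1$-linear derivation of $B'$ is in particular $k$-linear, the hypothesis $\mathcal{D}(J^{\mathcal{S}}) \subseteq J$ of Theorem \ref{Control} is satisfied. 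That theorem then gives $I' = (I' \cap A'_1) \cdot A'$, and Lemma \ref{Unloc} concludes $F_P = 0$.

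The main conceptual work has already been done: Proposition \ref{DerHyp} converts the non-containment hypothesis on $P$ into the derivation hypothesis, and Proposition \ref{DersIwa} manufactures the required source of derivations from any element of $\mathfrak{g}$. The only potential pitfall is in matching the exponents $p^{m_x + r}$ appearing in the source with the exponents $p^s$ demanded by Proposition \ref{DerHyp}, which is handled by cofinality.
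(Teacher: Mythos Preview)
Your proof is correct and follows essentially the same approach as the paper's: microlocalise at $P$, build sources of derivations from the elements $\gamma_x = \exp(p^{m_x}\tilde{x}) \in \Gamma$ via Proposition \ref{DersIwa}, verify the derivation hypothesis $\mathcal{D}(J^{\mathcal{S}}) \subseteq J$ using Proposition \ref{DerHyp}, and then invoke Theorem \ref{Control} and Lemma \ref{Unloc}. Your explicit remarks on cofinality of the exponents and on $B'_1$-linear versus $k$-linear derivations are minor clarifications of points the paper leaves implicit.
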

\begin{proof} Suppose for a contradiction that $P$ does not contain any subspace of $V$ of the form $\mathfrak{g}.v$ for $v \in V\backslash 0$. Let $P_1 = \iota_\ast P$; by Lemma \ref{MicLoc}, $(A_P, (A_1)_{P_1})$ is a Frobenius pair and we plan to apply the Control Theorem, Theorem \ref{Control} to it.

Let $x\in \mathfrak{g}$ and choose a lift $\tilde{x} \in N_{\Gamma}$ for $x$. There exists $m_x \geq 1$ such that $p^{m_x}\tilde{x} \in L_\Gamma$; then $\gamma_x := \exp(p^{m_x}\tilde{x})$ lies in $\Gamma$, so
\[\mathbf{a}(x) = \{\tau(\gamma_x), \tau(\gamma_x)^p, \tau(\gamma_x)^{p^2}, \ldots\}\]
is a source of derivations of $(A,A_1)$ by Proposition \ref{DersIwa}(a) and (b). We're only interested in the derivations of $B$ induced by $\mathbf{a}(x)$; by Proposition \ref{DersIwa}(c) these derivations do not depend on the choice of $\tilde{x}$, being precisely the $\rho(x)^{[p^r]}$ for $r \geq m_x$.

Let $\mathcal{S} := \{ \mathbf{a}(x) : x \in \mathfrak{g}\}$; then $\mathcal{S}_P := \{\mathbf{a}(x)_{T_P} : x \in \mathfrak{g}\}$ is a set of sources of derivations of $(A_P, (A_1)_{P_1})$ by Proposition \ref{SourceDers}(a).

Since $I$ is $\Gamma$-invariant, $I$ is clearly $\mathcal{S}$-invariant in the sense of $\S\ref{Star}$, and the 	definition of $\mathbf{a}(x)_{T_P}$ shows that the microlocalised right ideal $I_P$ of $A_P$ is $\mathcal{S}_P$-invariant. 

Let $J = \gr I_P$. In view of Proposition \ref{DersIwa}(c) and Proposition \ref{SourceDers}(b), if $Y \in B_P$ lies in the $\mathcal{S}_P$-closure $J^{\mathcal{S}_P}$ of $J$, then for all $x \in \mathfrak{g}$,
\[\rho(x)^{[p^r]}(Y) \in J\quad\mbox{for all}\quad r \gg 0.\]
It now follows from Proposition \ref{DerHyp} that $\mathcal{D}_P(Y)$ must be contained in $J$, and hence all the conditions of Theorem \ref{Control} are satisfied. We can therefore deduce from Theorem \ref{Control} that $I_P$ is controlled by $(A_1)_{P_1}$. Now Lemma \ref{Unloc} implies that $(\gr F)_P = 0$ and hence $P \notin \Ch(F)$, a contradiction.
\end{proof}

\subsection{Another control theorem}
\label{ContKG}Recall the definition of purity from $\S\ref{Pure}$.
\begin{thm} Let $I$ be a $\Gamma$-invariant right ideal of $kG$ and suppose that $kG/I$ is pure. Suppose that no  minimal prime $P$ above $\gr I$ contains $\mathfrak{g}.v$ for any $v\in V\backslash 0$. Then $I$ is controlled by $kG^p$.
\end{thm}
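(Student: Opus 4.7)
The plan is a short proof by contradiction. Write $A = kG$, $A_1 = kG^p$, $I_1 = I \cap A_1$, and $j := j_A(A/I)$, and assume that the failure of control module $F := I/I_1 A$ is non-zero; the conclusion $F = 0$ is exactly the control statement $I = I_1 \cdot A$.

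Step one is to promote the purity of $A/I$ to $F$. Proposition \ref{Pure} is tailor-made for this: since $A/I$ is pure and $F \neq 0$ by assumption, it yields that $F$ is itself pure of the same grade $j$. In particular $j_A(F) = j$, so by Lemma \ref{Pure}(b) there exists some $P \in \Ch(F)$ with $\hgt P = j$.

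Step two embeds $\Ch(F)$ into $\Ch(A/I)$ and shows that the prime $P$ just produced is in fact a minimal prime above $\gr I$. The short exact sequence
\[0 \to F \to A/I_1 A \to A/I \to 0,\]
combined with Corollary \ref{Ch}, gives $\Ch(F) \subseteq \Ch(A/I_1 A)$, and Proposition \ref{ChFrob} identifies $\Ch(A/I_1 A)$ with $\Ch(A/I) = \{Q \in \Spec_{\gr}(B) : \gr I \subseteq Q\}$. Thus $P$ belongs to this set and has height $j$. Since Lemma \ref{Pure}(b) applied to $A/I$ gives $j = \min\{\hgt Q : Q \in \Ch(A/I)\}$, any minimal prime $P_0$ above $\gr I$ that sits below $P$ satisfies $j \le \hgt P_0 \le \hgt P = j$; two comparable primes of the same height in a Noetherian ring must coincide, so $P_0 = P$. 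Hence $P$ itself is a minimal prime above $\gr I$.

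Step three is the clinch: since $P \in \Ch(F)$, Theorem \ref{SuppFail} furnishes a vector $v \in V \backslash 0$ with $\mathfrak{g} \cdot v \subseteq P$, contradicting the standing hypothesis that no minimal prime above $\gr I$ contains any such $\mathfrak{g} \cdot v$. The only step that really demands care is the invocation of Proposition \ref{Pure} in step one; that proposition is precisely the mechanism which transports the grade from $A/I$ down to $F$ and so furnishes a prime of minimal height in $\Ch(F)$ that we can hook into $\Ch(A/I)$. Everything else is routine chaining of the geometric machinery of $\S\ref{Prelim}$ with Theorem \ref{SuppFail}.
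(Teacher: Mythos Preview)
Your argument is correct and uses the same toolkit as the paper's proof --- Proposition~\ref{Pure}, Theorem~\ref{SuppFail}, the inclusion $\Ch(F)\subseteq\Ch(A/I_1A)=\Ch(A/I)$ from Corollary~\ref{Ch} and Proposition~\ref{ChFrob}, and Lemma~\ref{Pure}(b) --- but the logic runs in the contrapositive direction and is in one respect slightly more economical. The paper argues directly: it invokes Gabber's Purity of the Characteristic Variety theorem to conclude that every minimal prime in $\Ch(A/I)$ has height exactly $j(A/I)$, observes that $\Ch(F)\subseteq\Ch(A/I)\cap\mathcal{X}$ contains no minimal prime by hypothesis, deduces $j(F)>j(A/I)$, and then uses Proposition~\ref{Pure} to force $F=0$. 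You instead assume $F\neq 0$, use Proposition~\ref{Pure} first to obtain a prime $P\in\Ch(F)$ of height $j(A/I)$, and then argue elementarily that a prime of minimal height in $\Ch(A/I)$ must itself be minimal --- no appeal to Gabber is needed, since any $P_0\subsetneq P$ would automatically have strictly smaller height. This is a genuine, if modest, simplification: the equidimensionality of $\min\Ch(A/I)$ is not actually required for this theorem, only the trivial fact that a prime realising the minimum height cannot properly contain another prime in the support.
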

\begin{proof} Let $F = I / I_1A$ be the failure of control module, and let $\mathcal{X}$ denote the set of all graded prime ideals of $B$ that contain some $\mathfrak{g}.v$ for $v\in V\backslash 0$. By Corollary \ref{Ch}, Proposition \ref{ChFrob} and Theorem \ref{SuppFail},
\[\Ch(F) \subseteq \Ch(A/I_1A) \cap \mathcal{X} = \Ch(A/I) \cap \mathcal{X}.\]
Since $A/I$ is pure,  Gabber's Purity of the Characteristic Variety theorem \cite[Corollary 5.21]{BjE} implies that every prime in $\min \Ch(A/I)$ has the same height. Because none of these primes lie in $\mathcal{X}$ by assumption, it now follows from Lemma \ref{Pure}(b) that $j(F) > j(A/I)$. Therefore $F = 0$ by Proposition \ref{Pure}, as required.
\end{proof}

\subsection{Consequences for $\Ch(A/I)$ when $A/I$ is pure}\label{Restrictions}  
The group $\Gamma$ acts naturally on $G^p$, so $L_\Gamma$ acts on $L_{G^p} = pL_G$. It is easy to see that the normaliser $N_\Gamma$ for this action is the same as before, meaning that $\mathfrak{g}$ is unchanged. Now $\mathfrak{g}$ also acts on $V_1 := L_{G^p}/L_{G^{p^2}} = pL_G / p^2L_G$. Clearly the map $x \mapsto px$ induces an isomorphism of $\mathfrak{g}$-modules between $V$ and $V_1$; if we view $V$ and $V_1$ as being embedded into $B$ and $B_1$ respectively, then this isomorphism is given by $v \mapsto v^p$ for any $v \in V$.

\begin{lem} Let $\mathcal{X}_1 = \{\mathfrak{p} \in \Spec_{\gr}(B_1) : \mathfrak{p} \supseteq \mathfrak{g}\cdot v_1$ for some $v_1 \in V_1\backslash 0\}$. Then 
\[\mathcal{X}_1 = \iota_\ast(\mathcal{X}).\]
\end{lem}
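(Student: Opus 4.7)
The plan is to exploit the $\mathfrak{g}$-equivariant isomorphism $V \xrightarrow{\sim} V_1$ given by $v \mapsto v^p$ (under the embeddings $V \hookrightarrow B$ and $V_1 \hookrightarrow B_1$ coming from the canonical identifications $B = \Sym(V \otimes k)$ and $B_1 = \Sym(V_1 \otimes k)$), together with the explicit description of $\iota_\ast^{-1}$ given in the proof of Proposition \ref{ChFrob}, namely $\iota_\ast^{-1}(\mathfrak{p}) = \sqrt{\mathfrak{p}B}$.

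For the inclusion $\iota_\ast(\mathcal{X}) \subseteq \mathcal{X}_1$, I would start with $P \in \mathcal{X}$ and choose a non-zero $v \in V$ with $\mathfrak{g}\cdot v \subseteq P$. The corresponding element $v^p \in V_1$ is non-zero because the map $V \to V_1$ is an isomorphism. By the $\mathfrak{g}$-equivariance of this isomorphism, for each $x \in \mathfrak{g}$ we have $x\cdot v^p = (x\cdot v)^p$ inside $B_1$. Since $x\cdot v$ lies in the ideal $P$ of $B$ and $(x\cdot v)^p \in B_1$, we conclude that $(x\cdot v)^p \in P \cap B_1 = \iota_\ast P$. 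Hence $\mathfrak{g}\cdot v^p \subseteq \iota_\ast P$, which shows $\iota_\ast P \in \mathcal{X}_1$.

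For the reverse inclusion $\mathcal{X}_1 \subseteq \iota_\ast(\mathcal{X})$, I would take $\mathfrak{p} \in \mathcal{X}_1$ with $\mathfrak{g}\cdot v_1 \subseteq \mathfrak{p}$ for some $v_1 \in V_1\backslash 0$, and write $v_1 = v^p$ for the unique $v \in V\backslash 0$ corresponding to $v_1$ under the isomorphism. Setting $P := \iota_\ast^{-1}\mathfrak{p} = \sqrt{\mathfrak{p}B}$, for each $x \in \mathfrak{g}$ we obtain $(x\cdot v)^p = x\cdot v^p \in \mathfrak{p} \subseteq \mathfrak{p}B \subseteq P$; since $P$ is prime, $x\cdot v \in P$. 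Therefore $\mathfrak{g}\cdot v \subseteq P$ with $v \neq 0$, so $P \in \mathcal{X}$ and $\iota_\ast P = \mathfrak{p}$, completing the proof.

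The only genuine content is the compatibility of the two $\mathfrak{g}$-actions under $v \mapsto v^p$, which is precisely the paragraph preceding the lemma; after invoking that, both directions are short diagram-chasing arguments using the prime ideal structure and the formula $\iota_\ast^{-1}(\mathfrak{p}) = \sqrt{\mathfrak{p}B}$, so I do not anticipate a real obstacle.
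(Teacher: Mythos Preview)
Your proposal is correct and is precisely the argument the paper has in mind: the paper's proof is a one-line ``immediate'' invoking exactly the two ingredients you use, namely $\iota_\ast(P)=P\cap B_1$ and $\iota_\ast^{-1}(\mathfrak{p})=\sqrt{\mathfrak{p}B}$, together with the $\mathfrak{g}$-equivariance of $v\mapsto v^p$ recorded in the preceding paragraph. You have simply unpacked that one line into the two explicit inclusions.
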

\begin{proof} This is immediate, using the fact that $\iota_\ast(P) = P\cap B_1$ for $P \in \Spec_{\gr}(B)$, and that $\iota_\ast^{-1}(\mathfrak{p}) =  \sqrt{\mathfrak{p}B}$ for $\mathfrak{p} \in \Spec_{\gr}(B_1)$.
\end{proof}

\begin{prop} Let $I$ be a proper, non-zero $\Gamma$-invariant right ideal of $kG$ such that $kG/I$ is pure. Then there exists a minimal prime $P$ above $\gr I$ and $v \in V \backslash 0$ such that $\mathfrak{g}\cdot v \subseteq P$.
\end{prop}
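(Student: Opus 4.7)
The plan is to argue the contrapositive: assume no minimal prime above $\gr I$ contains $\mathfrak{g}\cdot v$ for any $v \in V\setminus 0$, and deduce $I = 0$. The strategy is to iterate Theorem \ref{ContKG}: we will show $I$ is controlled by $kG^{p^n}$ for every $n$, and then a characteristic-$p$ descent argument in the augmentation filtration forces $I = 0$, contradicting the hypothesis that $I$ is non-zero.

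First, Theorem \ref{ContKG} applies directly to $I$, giving $I = I_1 \cdot kG$ where $I_1 := I \cap kG^p$. The plan is now to apply Theorem \ref{ContKG} to the pair $(kG^p, kG^{p^2})$ and to the ideal $I_1$. The pair $(kG^p, kG^{p^2})$ is again a Frobenius pair, and the discussion in $\S\ref{Restrictions}$ shows that $\Gamma$ acts on $G^p$ with the same Lie algebra $\mathfrak{g}$, and that $V_1 \cong V$ as $\mathfrak{g}$-modules. Since $kG^p$ is characteristic in $kG$, $I_1$ is $\Gamma$-invariant. Moreover, $kG^p/I_1$ embeds as a $kG^p$-submodule of the pure module $kG/I$, so it is pure by Lemma \ref{Pure}(d). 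The main technical point is the transfer of the minimal-prime hypothesis: from the control theorem proof one has $\gr I_1 = J \cap B_1$, where $J = \gr I$. Because $y \in J$ implies $y^p \in J \cap B_1$, one checks that $\sqrt{(J\cap B_1)B} = \sqrt{J}$, so the bijection $\iota_\ast$ of Proposition \ref{ChFrob} carries the minimal primes of $J$ in $B$ onto the minimal primes of $J\cap B_1$ in $B_1$. By the Lemma in $\S\ref{Restrictions}$, $\iota_\ast(\mathcal{X}) = \mathcal{X}_1$, so the assumption propagates. Applying Theorem \ref{ContKG} yields $I_1 = I_2 \cdot kG^p$ where $I_2 := I \cap kG^{p^2}$, and therefore $I = I_2 \cdot kG$. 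Iterating, $I = I_n \cdot kG$ for every $n \geq 1$, where $I_n := I \cap kG^{p^n}$.

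The final step is a depth estimate. Since $I$ is proper, $I_n$ is a proper right ideal of $kG^{p^n}$, and consequently $I_n \cap k = 0$, so $I_n \subseteq \omega(kG^{p^n})$, the augmentation ideal. Here the characteristic of $k$ enters crucially: for any $h \in G$, one has $h^{p^n} - 1 = (h-1)^{p^n}$ in $kG$, and because every element of $G^{p^n}$ is a $p^n$-th power (uniformity of $G$), this gives $\omega(kG^{p^n}) \subseteq \omega(kG)^{p^n}$. Therefore $I = I_n \cdot kG \subseteq \omega(kG)^{p^n}$ for all $n$, and the separation of the augmentation topology on the completed group algebra $kG$ forces $I = 0$, the desired contradiction.

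The only genuine obstacle is the transfer step: one must verify that the ``avoidance'' hypothesis on minimal primes really does survive passage from $(B,V)$ to $(B_1, V_1)$. This amounts to matching minimal primes under $\iota_\ast$ and identifying $V$ with $V_1$ as $\mathfrak{g}$-modules, both of which are essentially supplied by Proposition \ref{ChFrob} and by the discussion in $\S\ref{Restrictions}$. Everything else, including the concluding descent argument, is routine given the tools developed above.
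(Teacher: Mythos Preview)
Your proof is correct and follows essentially the same route as the paper's: contrapositive, iterate Theorem \ref{ContKG} down the tower $kG \supset kG^p \supset kG^{p^2} \supset \cdots$, then force $I=0$ by separation of the augmentation filtration. The only minor differences are cosmetic: the paper invokes Proposition \ref{ChFrob} directly for the transfer of the minimal-prime hypothesis (rather than going via $\gr I_1 = J \cap B_1$, which incidentally is not literally what Theorem \ref{ContKG} establishes globally, though Proposition \ref{ChFrob} gives what you need), and for the final step it just notes that $kG^{p^n}$ is scalar local so $I \subseteq (G^{p^n}-1)kG$, the intersection of which over all $n$ is zero.
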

\begin{proof} Suppose for a contradiction that $\min \Ch(A/I) \cap \mathcal{X} = \emptyset;$ we will show that $I = 0$. By Theorem \ref{ContKG}, $I$ is controlled by $A_1$: $I = I_1A$. Since $\iota_\ast(\Ch(A/I)) = \Ch(A_1/I_1)$ by Proposition \ref{ChFrob} and since $\iota_\ast(\mathcal{X}) = \mathcal{X}_1$ by the lemma, we see that
\[\min \Ch(A_1/I_1) \cap \mathcal{X}_1 = \emptyset;\]
moreover $A_1/I_1$ is pure by Lemma \ref{Pure}(c). Thus $I_1$ satisfies the same conditions as $I$. We can now apply the above argument repeatedly and deduce that $I$ is controlled by $kG^{p^n}$ for all $n\geq 0$. Since $I$ is proper and since $kG^{p^n}$ is scalar local, $I = (I \cap kG^{p^n})kG \subseteq (G^{p^n} - 1)kG$ for all $n \geq 0$. The intersection of these augmentation ideals is zero, so $I = 0$. This is the required contradiction.
\end{proof}

\begin{cor} Let $I$ be a proper, non-zero $\Gamma$-invariant right ideal of $kG$ such that $kG/I$ is pure. Then \[j(kG/I) \geq u:= \min \{\dim \mathfrak{g}\cdot v : v \in V \backslash 0\}.\]
\end{cor}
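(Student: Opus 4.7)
The plan is to combine the preceding Proposition with Lemma \ref{Pure}(b) and Gabber's purity of the characteristic variety. By the Proposition, the hypotheses on $I$ guarantee the existence of a minimal prime $P$ over $\gr I$ and a nonzero $v\in V$ with $\mathfrak{g}\cdot v\subseteq P$. Since $\Ann_B(\gr(A/I))=\gr I$, the characteristic support $\Ch(A/I)$ consists precisely of the graded primes containing $\gr I$, so $P$ is a minimal element of $\Ch(A/I)$.

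Next I would bound the height of $P$ from below. Because $B=\Sym(V\otimes k)$ is a polynomial ring, the ideal $(\mathfrak{g}\cdot v)\cdot B$ generated by the $k$-subspace $(\mathfrak{g}\cdot v)\otimes k$ of $V\otimes k$ is itself a graded prime of $B$ of height equal to $\dim_{\mathbb{F}_p}\mathfrak{g}\cdot v$, being generated by that many linearly independent linear forms. Since $P\supseteq (\mathfrak{g}\cdot v)\cdot B$, we get
\[\hgt P\;\geq\;\dim\mathfrak{g}\cdot v\;\geq\;u.\]

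Finally, to turn this lower bound on a single minimal prime into a lower bound on the grade $j_A(A/I)$, I invoke the purity hypothesis. By Gabber's theorem (as cited in the proof of Theorem \ref{ContKG}), since $A/I$ is pure all minimal elements of $\Ch(A/I)$ have the same height, namely $j_A(A/I)$; alternatively, this equality is part of Lemma \ref{Pure}(b) which identifies $j_A(A/I)$ with $\min\{\hgt Q:Q\in\Ch(A/I)\}$ (the minimum being attained on minimal primes). In either formulation we conclude $j(kG/I)=\hgt P\geq u$. There is no real obstacle here beyond assembling the pieces: the substantive work has been done in the preceding Proposition (which itself rested on Theorem \ref{SuppFail} and the control theorem), and the only thing worth flagging is that purity is essential, since without it a single bound on one minimal prime would not control the minimum height over $\Ch(A/I)$.
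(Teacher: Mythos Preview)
Your proof is correct and follows essentially the same approach as the paper: invoke the preceding Proposition to obtain a minimal prime $P\in\Ch(A/I)$ containing some $\mathfrak{g}\cdot v$, observe that $(\mathfrak{g}\cdot v)B$ is prime of height $\dim\mathfrak{g}\cdot v$ so $\hgt P\geq u$, and then use Gabber's purity theorem together with Lemma~\ref{Pure}(b) to conclude $j(A/I)=\hgt P\geq u$. The only cosmetic difference is that you spell out why $\min\Ch(A/I)$ coincides with the minimal primes over $\gr I$, which the paper leaves implicit.
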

\begin{proof} By the theorem, we can find $P \in \min \Ch(A/I)$ such that $P \in \mathcal{X}$. Then $P$ contains $(\mathfrak{g}\cdot v)B$, which is clearly a prime ideal of height $\dim \mathfrak{g}\cdot v$. Hence $\hgt P \geq \dim \mathfrak{g}\cdot v \geq u$. Since every prime in $\min \Ch(A/I)$ has the same height by Gabber's purity theorem, we can now use Lemma \ref{Pure}(b) to deduce that $j(A/I) = \hgt P \geq u$ as required.
\end{proof}

In fact, the assumption that $kG/I$ is pure is unnecessary:

\subsection{Theorem}\label{Main} Let $I$ be a proper, non-zero $\Gamma$-invariant right ideal of $kG$. Then there exists a minimal prime ideal $P$ above $\gr I$ and $v \in V \backslash 0$ such that $\mathfrak{g}\cdot v \subseteq P$. It follows that $j(kG/I) \geq \min \{\dim \mathfrak{g}\cdot v : v \in V \backslash 0\}$.
\begin{proof}
Let $\overline{I}/I$ be the largest submodule of $A/I$ of grade strictly bigger than $j(A/I)$. We claim that the right ideal $\overline{I}$ is still $\Gamma$-invariant. For any $\gamma \in \Gamma$, we saw in the proof of Proposition \ref{SourceDers} that $\gr \gamma(x) = \gr x$ for any $x \in A$. This means that $\gr \gamma(\overline{I}) = \gr \overline{I}$, so
\[j_A(\gamma(\overline{I}) / I) = j_B(\gr \gamma(\overline{I}) / \gr I) = j_B(\gr \overline{I}/\gr I) = j_A(\overline{I} / I) > j_A(A/I)\]
by Lemma \ref{Pure}(b). Since $\gamma(\overline{I})$ contains $\gamma(I) = I$, it follows that $\gamma(\overline{I}) \subseteq \overline{I}$. Hence $\overline{I}$ is $\Gamma$-invariant as claimed, it is still proper and non-zero, but now $A/\overline{I}$ is also pure.

Let $P \in \min\Ch(A/\overline{I})$; then $P \in \Ch(A/I)$ by Corollary \ref{Ch}. If $Q \subseteq P$ also lies in $\Ch(A/I)$, then $\hgt Q \geq j(A/I) = j(A/\overline{I}) = \hgt P$ by Lemma \ref{Pure}(b), forcing $Q = P$. Hence
\[\min\Ch(A/\overline{I}) \subseteq \min\Ch(A/I).\]
Therefore $\min\Ch(A/I) \cap \mathcal{X} \neq \emptyset$ by Proposition \ref{Restrictions}, and $j(A/I) = j(A/\overline{I}) \geq u$ by Corollary \ref{Restrictions}.
\end{proof}

\section{Applications}
\label{Appl}
\subsection{Linear actions}
One obvious example of a situation where our results are applicable is the case of a uniform pro-$p$ group $\Gamma$ acting linearly on some free abelian pro-$p$ group $G$ of finite rank $n$. Because $G$ is abelian, $\Gamma$ automatically acts by group automorphisms, and we only have to assume that the image of the action is contained in the first congruence subgroup of $\Aut(G) \cong \GL_n(\Zp)$ to ensure the action is uniform. 

\begin{thm} Suppose that the image of the action of $\Gamma$ is open in $\GL_n(\Zp)$. Then the only $\Gamma$-invariant prime ideals of $kG$ are the zero ideal and the augmentation ideal.
\end{thm}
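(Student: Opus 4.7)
The plan is to apply Theorem \ref{Main} and compute the invariant $u$ in this setting. Since $G$ is abelian, the group algebra $kG$ is canonically isomorphic to the power series ring $k[[t_1,\ldots,t_n]]$, a commutative regular (hence Gorenstein and Cohen--Macaulay) complete local Noetherian domain of Krull dimension $n$. Being commutative, prime ideals of $kG$ in the non-commutative sense coincide with ordinary prime ideals. Note that both $0$ and the augmentation ideal $\mathfrak{m} = \ker(kG \to k)$ are manifestly $\Gamma$-invariant primes, so it suffices to show that every non-zero $\Gamma$-invariant prime ideal $I$ of $kG$ equals $\mathfrak{m}$.

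The first step is to compute $\mathfrak{g}$ and its action on $V = L_G/pL_G$. Since the image of $\tau: \Gamma \to \Aut(G) \cong \GL(L_G)$ is open in $\GL(L_G)$ and contained in the first congruence subgroup $\Gamma_1(\GL(L_G))$, the image of $\sigma : L_\Gamma \to p\End_{\Zp}(L_G)$ is an open $\Zp$-submodule of $p\End_{\Zp}(L_G)$. Hence there exists $N \geq 0$ with $p^N\End_{\Zp}(L_G) \subseteq \sigma(L_\Gamma)$. Given any $\varphi \in \End_{\Zp}(L_G)$, choose $x \in L_\Gamma$ with $\sigma(x) = p^N\varphi$; then $p^{-N}x \in \Qp L_\Gamma$ satisfies $\sigma(p^{-N}x) = \varphi$, so $p^{-N}x \in N_\Gamma$. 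This shows $\sigma(N_\Gamma) = \End_{\Zp}(L_G)$, and reducing modulo $p$ gives $\rho(\mathfrak{g}) = \End_{\Fp}(V) = \mathfrak{gl}(V)$.

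Consequently, for every $v \in V\backslash 0$ we have $\mathfrak{g}\cdot v = \mathfrak{gl}(V)\cdot v = V$ (the orbit of any non-zero vector under all of $\mathfrak{gl}(V)$ is all of $V$), so $\dim\mathfrak{g}\cdot v = n$ for all $v \in V\backslash 0$ and hence $u = n$. By Theorem \ref{Main}, any non-zero proper $\Gamma$-invariant right ideal $I$ of $kG$ satisfies $j(kG/I) \geq n$.

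Finally, take such an $I$ that is also prime. Because $kG$ is a commutative Cohen--Macaulay ring, the grade of $kG/I$ equals the height of $I$, so $\hgt I \geq n$. Since $\hgt I \leq \dim kG = n$, equality forces $I$ to be the unique maximal ideal of $k[[t_1,\ldots,t_n]]$, namely the augmentation ideal $\mathfrak{m}$, completing the proof. The main work in this argument is the first step: extracting surjectivity of $\sigma$ onto $\End_{\Zp}(L_G)$ from the openness hypothesis, together with the passage from the grade bound of Theorem \ref{Main} to the conclusion via the standard grade/height identity in commutative Cohen--Macaulay rings.
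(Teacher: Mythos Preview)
Your proof is correct and follows essentially the same approach as the paper: compute that $\rho(\mathfrak{g}) = \mathfrak{gl}(V)$, deduce $u = n$, apply Theorem \ref{Main}, and conclude using the fact that $kG$ is local of dimension $n$. The paper makes a preliminary reduction to the case where the image of $\Gamma$ is exactly a congruence subgroup (so that $N_\Gamma = \mathfrak{gl}_n(\Zp)$ on the nose), whereas you work directly from openness to obtain $\sigma(N_\Gamma) = \End_{\Zp}(L_G)$; and where the paper simply says ``since $kG$ is local'', you spell out the grade--height identity via Cohen--Macaulayness---but these are differences of exposition, not of strategy.
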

\begin{proof} Without loss of generality, we may assume that the image of $\Gamma$ is equal to a congruence subgroup of $\GL_n(\Zp)$. Then it is easy to see that the normaliser Lie algebra $N_\Gamma$ defined in $\S\ref{Normaliser}$ is the full linear Lie algebra $\mathfrak{gl}_n(\Zp)$. Hence $\mathfrak{g} = N_\Gamma / pN_\Gamma = \mathfrak{gl}_n(\Fp)$ and $V = G / G^p = \Fp^n$ is the natural $\mathfrak{g}$-module. It is now clear that $\min\{\dim \mathfrak{g} \cdot v : v \in V \backslash 0\} = n$, so Theorem \ref{Main} implies that a non-zero $\Gamma$-invariant prime ideal $I$ of $kG$ must satisfy $j(kG/I) = n$. This forces $I$ to be the augmentation ideal of $kG$ since $kG$ is local, and the result follows.
\end{proof}

An obvious modification of the above proof shows that the result is also true if we only assume that the image of $\Gamma$ is an open subgroup of $\SL_n(\Zp)$. These results provide some very weak evidence towards the following conjecture, which is inspired by Roseblade's theorem \cite[Theorem D]{R}:

\begin{conj} Let $I$ be a $\Gamma$-invariant prime ideal of $kG$ which is \emph{faithful}, that is $(1 + I) \cap G = 1$. Then $I$ is controlled by the subgroup of $\Gamma$-fixed points of $G,$ $G^\Gamma$:
\[I = (I \cap kG^\Gamma)kG.\]
\end{conj}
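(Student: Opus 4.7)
This is the paper's main open conjecture, and a full proof appears to lie beyond the tools developed above; nonetheless a coherent plan can be sketched. The first move is to reduce to the case $G^\Gamma = 1$, where the conjecture becomes the assertion that every non-zero faithful $\Gamma$-invariant prime ideal of $kG$ is zero (indeed $kG^\Gamma = k$ in this case, so $I \cap kG^\Gamma$ is either $0$ or all of $k$, and the latter contradicts properness of $I$). To effect the reduction one would produce a $\Gamma$-invariant closed normal subgroup $N$ of $G$ containing $G^\Gamma$ and pass to the quotient $G/N$, which is again uniform pro-$p$ and carries a natural $\Gamma$-action; some care is required since $G^\Gamma$ need not itself be normal, but its normal closure inside $G$ is still $\Gamma$-invariant and one would argue by Noetherian induction on $\dim(G/N)$.

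Having reduced to $G^\Gamma = 1$, the input from this paper is Theorem \ref{Main}: for a non-zero $\Gamma$-invariant $I$ it produces $v \in V\backslash 0$ and a minimal prime $P$ above $\gr I$ with $\mathfrak{g}\cdot v \subseteq P$. The hypothesis $G^\Gamma = 1$ translates into $V^{\mathfrak{g}} = 0$, so $\mathfrak{g}\cdot v$ is a non-zero proper subspace of $V$. The strategy is then an infinite descent: one wants to replace $G$ by a proper $\Gamma$-invariant uniform open subgroup $H(v) \subsetneq G$, cut out from $G$ by the annihilator of $v$ at the level of $V$, in such a way that $I$ is controlled by $kH(v)$ and $I\cap kH(v)$ is again a faithful $\Gamma$-invariant prime of $kH(v)$. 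Iterating produces a strictly descending chain of $\Gamma$-invariant uniform open subgroups $G \supsetneq H_1 \supsetneq H_2 \supsetneq \cdots$ whose intersection must be trivial by faithfulness; combined with the scalar-local structure of each $kH_i$, as at the end of the proof of Proposition \ref{Restrictions}, this would force $I = 0$ and deliver the required contradiction.

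The principal obstacle is the single-step refinement sitting inside this descent: one must upgrade Theorem \ref{ContKG} from a control statement by $kG^p$ to a control statement by the strictly smaller $kH(v)$ singled out by the orbit data $\mathfrak{g}\cdot v \subseteq P$. Concretely, this calls for a sharpening of Proposition \ref{DerHyp}: rather than merely invoking the generic hypothesis $\mathfrak{g}\cdot w \not\subseteq P$ for all $w \in V\backslash 0$, one isolates the specific orbit $\mathfrak{g}\cdot v \subseteq P$ and shows that the $\Gamma$-derivations generate exactly the $B_P$-submodule of $\mathcal{D}_P$ dual to $H(v)$. A secondary but equally delicate obstacle is to verify that faithfulness descends from $(G,I)$ to $(H(v), I\cap kH(v))$, since intersection with a subring can a priori destroy the condition $(1+I)\cap G = 1$. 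The closest classical analogue is Roseblade's theorem for polycyclic group rings, whose proof combines exactly this sort of orbit-theoretic control step with a complexity induction, and we expect any proof of the present conjecture to require an analogous fusion of microlocal and orbit-theoretic techniques.
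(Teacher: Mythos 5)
The statement you are addressing is left as an open conjecture in the paper: the authors offer no proof of it, only the remark that it is inspired by Roseblade's theorem, and they present Theorem \ref{Main} and its linear-action corollary as ``very weak evidence''. So there is no proof in the paper to compare against, and your submission, by your own admission, is a strategy sketch rather than a proof; as such it cannot be accepted as establishing the statement. Several of the steps you treat as routine are in fact genuinely problematic. The reduction to $G^\Gamma = 1$ by passing to $G/N$ with $N$ the normal closure of $G^\Gamma$ does not obviously make sense: the conjecture is a control statement about $I$ inside $kG$, not a statement about ideals of $k[G/N]$, there is no canonical way to transport a faithful prime $I$ of $kG$ to a prime of $k[G/N]$ (nothing forces $I$ to contain the kernel of $kG \to k[G/N]$), and a quotient of a uniform group by a closed normal subgroup need not be uniform. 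Moreover the translation ``$G^\Gamma = 1$ implies $V^{\mathfrak{g}} = 0$'' is false in general: triviality of the $\Gamma$-fixed points on $G$ does not imply triviality of the $\mathfrak{g}$-invariants of the mod-$p$ reduction $V = G/G^p$, since invariants can appear after reduction.

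The core of your plan is also resting on unproved assertions. The single-step refinement you describe --- upgrading Theorem \ref{ContKG} from control by $kG^p$ to control by a subgroup $H(v)$ cut out by the orbit datum $\mathfrak{g}\cdot v \subseteq P$, via a sharpened Proposition \ref{DerHyp} --- is precisely the new mathematics that would be needed, and you correctly flag it as open; but that means the argument has no engine. In addition, the concluding descent is not valid as stated: a strictly descending chain of open (hence finite-index) subgroups $G \supsetneq H_1 \supsetneq H_2 \supsetneq \cdots$ need not have trivial intersection, and ``faithfulness'' of $I$ gives no control over that intersection; the analogous step in Proposition \ref{Restrictions} works because the subgroups there are the specific chain $G^{p^n}$, whose intersection is trivial for elementary reasons, and because control by each $kG^{p^n}$ is actually proved there. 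Finally, you would also need to show that faithfulness and primeness pass to $I \cap kH(v)$, which you again only flag as an obstacle. In short: the statement remains a conjecture, and your text should be presented as a discussion of a possible Roseblade-style attack, not as a proof.
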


A more conceptual proof of Roseblade's theorem can be found in \cite[\S1.3]{BG}.

\subsection{Calculations with Chevalley Lie algebras}
\label{ChevComp} Let $\Phi$ be an indecomposable root system. We fix a set of fundamental roots $\Pi$ and the corresponding set of positive roots $\Phi^+$ of $\Phi$. Let $\rho = \frac{1}{2}\sum_{s \in \Phi^+}s$ and let $\theta$ be the highest root of $\Phi^+$. Recall that the root system $\Phi$ is by definition a finite subset of some real Euclidean space, so the inner product $(2\rho, \theta)$ makes sense.

Let $\mathfrak{g}_k = \Phi(k)$ be the Chevalley Lie algebra over our field $k$ of characteristic $p$. Recall \cite[\S 4]{C} that by definition, $\mathfrak{g}_k$ has a basis consisting of the root vectors $e_s$ ($s \in \Phi$) , and the fundamental co-roots $h_r$, ($r\in\Pi$). Let $\mathfrak{b}_k$ denote the (positive) Borel subalgebra of $\mathfrak{g}_k$, spanned by all of the co-roots and the $e_s$ where $s \in \Phi^+$. 

Recall \cite[\S 0.3]{AWZ2} that we call $p$ a \emph{nice prime} for $\Phi$ if $p\geq 5$ and if $p \nmid n+1$ when $\Phi$ is the root system $A_n$. We would like to thank Alexander Premet for providing the proof of the following result.

\begin{prop} Suppose that $p$ is a nice prime for $\Phi$. Let 
\[u_k:=\min\{\dim_k [\mathfrak{g}_k,x] : x \in \mathfrak{g}_k \backslash 0\}.\]
Then $u_k = (2\rho, \theta)$.
\end{prop}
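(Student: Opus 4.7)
The quantity $\dim_k [\mathfrak{g}_k, x]$ equals $\dim_k \mathfrak{g}_k - \dim_k \mathfrak{z}_{\mathfrak{g}_k}(x)$, so it is the dimension of the adjoint $G$-orbit through $x$. The plan is to establish matching inequalities $u_k \leq (2\rho, \theta)$ and $u_k \geq (2\rho, \theta)$. Throughout, the assumption that $p$ is a nice prime will be used to guarantee the availability of Jacobson--Morozov triples, Jordan decomposition inside $\mathfrak{g}_k$, and the fact that the $G$-orbit structure on the nilpotent cone of $\mathfrak{g}_k$ matches the characteristic-zero picture.

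For the upper bound we would test on the highest-root vector $e_\theta$. The triple $(e_{-\theta}, h_\theta, e_\theta)$ is an honest $\mathfrak{sl}_2$-triple in $\mathfrak{g}_k$, and the induced $\ad(h_\theta)$-grading takes the form
\[\mathfrak{g}_k = \mathfrak{g}_{-2} \oplus \mathfrak{g}_{-1} \oplus \mathfrak{g}_0 \oplus \mathfrak{g}_1 \oplus \mathfrak{g}_2, \qquad \mathfrak{g}_{\pm 2} = k\, e_{\pm \theta}.\]
Standard $\mathfrak{sl}_2$-representation theory (valid since $p > 2$) gives $\ad(e_\theta)(\mathfrak{g}_{-1}) = \mathfrak{g}_1$, $\ad(e_\theta)(\mathfrak{g}_{-2}) = k\, h_\theta$, $\ad(e_\theta)(\mathfrak{g}_0) = k\, e_\theta$, and that $\ad(e_\theta)$ vanishes on $\mathfrak{g}_1 \oplus \mathfrak{g}_2$. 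This gives $\dim_k [\mathfrak{g}_k, e_\theta] = 2 + \dim_k \mathfrak{g}_1$, which with the normalisation $(\theta, \theta) = 2$ equals $(2\rho, \theta) = \sum_{r \in \Phi^+} (r, \theta)$ once one observes that the right-hand side is contributed by $\theta$ itself and by the positive roots $r$ with $(r, \theta^\vee) = 1$.

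For the lower bound we would pass to the algebraic closure of $k$ and invoke the Jordan decomposition $x = x_s + x_n$, available for nice $p$. Setting $\mathfrak{l} := \mathfrak{z}_{\mathfrak{g}_k}(x_s)$, the identity $\mathfrak{z}_{\mathfrak{g}_k}(x) = \mathfrak{z}_\mathfrak{l}(x_n)$ yields
\[\dim_k [\mathfrak{g}_k, x] = \dim_k [\mathfrak{g}_k, x_s] + \dim_k [\mathfrak{l}, x_n].\]
Since both terms are non-negative, it suffices to verify the bound in the two pure cases, $x$ semisimple non-zero and $x$ nilpotent non-zero. In the semisimple case $\mathfrak{z}_{\mathfrak{g}_k}(x_s)$ is a proper pseudo-Levi subalgebra of $\mathfrak{g}_k$; after conjugating $x_s$ into a Cartan, its codimension equals $\#\{r \in \Phi : r(x_s) \neq 0\}$, and a Borel--de Siebenthal analysis of proper root subsystems of $\Phi$ shows that this number is bounded below by $(2\rho, \theta)$.

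The main obstacle will be the nilpotent case, which is where Premet's input is indispensable. Given a non-zero nilpotent $x \in \mathfrak{g}_k$, one needs to show $\dim G \cdot x \geq (2\rho, \theta)$, equivalently that the $G$-orbit of $e_\theta$ is the unique minimal non-zero nilpotent orbit in $\mathfrak{g}_k$. For $p$ a nice prime, the Bala--Carter classification of nilpotent orbits in $\mathfrak{g}_k$ and their closure order agrees with the characteristic-zero picture; in particular, every non-zero nilpotent orbit closure contains $G \cdot e_\theta$, so every such orbit has dimension at least $(2\rho, \theta)$. Combining this with the semisimple case and the additivity formula yields $u_k \geq (2\rho, \theta)$, completing the proof.
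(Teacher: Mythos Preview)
Your upper bound, testing on $e_\theta$, matches the paper's computation; the real divergence is in the lower bound.

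The paper's argument for the lower bound is much shorter and avoids all case analysis. Over $\bar{k}$, set $X(n)=\{x\in\mathfrak{g}_k:\dim[\mathfrak{g}_k,x]\le n\}$, so $u_k=\min\{n:X(n)\neq\{0\}\}$. The set $X(u_k)$ is Zariski-closed, conical, nonzero and stable under the adjoint Chevalley group $\mathcal{G}$; applying Borel's fixed point theorem to a Borel subgroup $\mathcal{B}$ acting on the projectivisation of $X(u_k)$ produces a $\mathcal{B}$-stable line $\langle x\rangle\subset X(u_k)$, i.e.\ a highest-weight line. Because $p$ is nice, $\mathfrak{g}_k$ is simple and hence irreducible over itself, so $x$ is a scalar multiple of $e_\theta$. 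Thus $e_\theta\in X(u_k)$, giving $u_k=\dim[\mathfrak{g}_k,e_\theta]$ in one stroke. No Jordan decomposition, no orbit classification, no root-subsystem bookkeeping.

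Your route through Jordan decomposition is reasonable in spirit, but two of the steps are not justified at the level of generality you need. First, in the semisimple case the set $\{\alpha\in\Phi:\alpha(x_s)=0\}$ is indeed a closed symmetric root subsystem, but over a field of characteristic $p$ it need not be one of the subsystems visible over $\mathbb{Q}$; a ``Borel--de Siebenthal analysis'' classifies maximal closed subsystems over $\mathbb{R}$ and does not by itself bound $|\Phi\smallsetminus\Psi|$ from below for every $\Psi$ that can arise here. This step needs an actual argument, not a citation. Second, in the nilpotent case you invoke the Bala--Carter classification and the characteristic-zero closure order; these are established for \emph{good} primes, whereas the paper's ``nice'' hypothesis allows $p=5$ for $E_8$, which is not good. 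The fact you want---that every nonzero $\mathcal{G}$-stable closed cone in $\mathfrak{g}_k$ contains $e_\theta$---is true, but the clean proof of it is precisely the Borel fixed point argument above, so appealing to Bala--Carter here is both heavier and, as stated, not quite applicable.
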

\begin{proof} We first assume that the field $k$ is algebraically closed. Let $\mathcal{G}$ denote the Chevalley group over $k$ associated to $\Phi$. This is a connected, adjoint, simple algebraic group over $k$, whose Lie algebra equals $\mathfrak{g}_k$. Hence $\mathcal{G}$ acts naturally on $\mathfrak{g}_k$ by Lie algebra automorphisms. For each $n \geq 0$, let $X(n) = \{x \in \mathfrak{g}_k : \dim [\mathfrak{g}_k,x] \leq n\}$. Then clearly
\[ u_k = \min\{n : X(n) \neq \{0\}\}.\]
Now $X(u_k)$ is a closed subset of $\mathfrak{g}_k$ in the Zariski topology, which is moreover conical, $\mathcal{G}$-stable and contains at least one line. Let $\mathcal{B}$ be the Borel subgroup of $\mathcal{G}$ whose Lie algebra equals $\mathfrak{b}_k$. By Borel's fixed point theorem \cite[Chapter III, Theorem 10.4]{Borel}, $X(u_k)$ contains a $\mathcal{B}$-stable line $\langle x\rangle$ for some nonzero $x \in \mathfrak{g}_k$. By our choice of $\mathcal{B}$, $x$ is then a highest weight vector for the $\mathcal{G}$-module $\mathfrak{g}_k$, so $x$ is also a highest weight vector for the $\mathfrak{g}_k$-module $\mathfrak{g}_k$.

Our assumption on $p$ implies that $\mathfrak{g}_k$ is a simple Lie algebra --- see \cite[p. 187]{S}. Hence this $\mathfrak{g}_k$-module is irreducible. It now follows (essentially from the PBW theorem) that $x$ must be a non-zero scalar multiple of $e_\theta$, where $\theta$ is the highest root of $\Phi^+$. Following \cite{Wang}, we call $\mathbb{S} := \{ \beta \in \Phi^+: \theta - \beta \in \Phi\}$ the set of \emph{special roots}. Since $p \geq 5$ by assumption, Chevalley's basis theorem \cite[Theorem 4.2.1]{C} implies that $[e_r, e_s] \neq 0$ whenever $r,s$ and $r+s\in\Phi$. We can therefore compute
\[ [\mathfrak{g}_k, e_\theta] = \langle e_\theta, h_\theta\rangle \oplus \langle e_{\theta - \beta} : \beta\in\mathbb{S}\rangle,\]
whence $u_k = \dim_k [\mathfrak{g}_k, e_\theta] = |\mathbb{S}| + 2$. It is shown in the proof of \cite[Lemma 3]{Wang} that $|\mathbb{S}|+2 = (2\rho, \theta)$, so $u_k = (2\rho, \theta)$ when $k$ is algebraically closed.

Returning to the general case, let $\overline{k}$ denote an algebraic closure of $k$. The computation of $[\mathfrak{g}_k, e_\theta]$ performed above does not require $k = \overline{k}$, so 
\[ (2\rho, \theta) = \dim_k [\mathfrak{g}_k, e_\theta] \geq u_k \geq u_{\overline{k}} = (2\rho, \theta)\]
and the proposition follows.
\end{proof}

\subsection{Iwasawa algebras of Chevalley type}
\label{IwaChev}
Here is our main result.

\begin{thm} Let $p$ be a nice prime for $\Phi$, let $G = \exp(p^t \Phi(\Zp))$ be the uniform pro-$p$ group of Chevalley type for some $t \geq 1$, and let $I$ be a non-zero two-sided ideal of $kG$. Then 
\[ j(kG/I) \geq (2\rho, \theta).\]
\end{thm}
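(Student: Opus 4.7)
The plan is to apply Theorem \ref{Main} to the action of $\Gamma := G$ on itself by conjugation. Since $p\geq 5$ and $G$ is uniform, $G$ is powerful, hence $[G,G] \subseteq G^p$; this says precisely that conjugation is a uniform action in the sense of $\S\ref{UnifAct}$. For this action, the $\Gamma$-invariant right ideals of $kG$ coincide with the two-sided ideals, so the hypothesis applies to our $I$. We may assume $I$ is proper, for otherwise $kG/I = 0$ and the statement is vacuous. Theorem \ref{Main} then yields
\[j(kG/I) \geq u = \min\{\dim_{\Fp}\mathfrak{g}\cdot v : v \in V \backslash 0\},\]
so it is enough to prove $u \geq (2\rho,\theta)$.

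Next I would compute $\mathfrak{g}$ and its action on $V$ explicitly in the conjugation setting. With $L_G = L_\Gamma = p^t\Phi(\Zp)$, the homomorphism $\sigma : L_\Gamma \to p\End_{\Zp}(L_G)$ of $\S\ref{Normaliser}$ is just $x \mapsto \mathrm{ad}(x)$. Hence
\[N_\Gamma = \{x \in \Qp\Phi(\Zp) : [x,\Phi(\Zp)] \subseteq \Phi(\Zp)\} \supseteq \Phi(\Zp),\]
since the Chevalley bracket preserves the $\Zp$-form. Reducing modulo $p$ gives an inclusion $\mathfrak{g}_{\Fp} := \Phi(\Fp) \hookrightarrow \mathfrak{g}$. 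Multiplication by $p^t$ induces an $\Fp$-linear isomorphism $\mathfrak{g}_{\Fp} \xrightarrow{\sim} V$ which, by direct inspection of the $\mathfrak{g}$-action on $V$ defined in $\S\ref{Normaliser}$, intertwines the adjoint action of $\mathfrak{g}_{\Fp}$ on itself with the restriction of the $\mathfrak{g}$-action on $V$ along $\mathfrak{g}_{\Fp} \hookrightarrow \mathfrak{g}$.

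Combining these observations, for any non-zero $v \in V$ we have $\mathfrak{g}\cdot v \supseteq \mathfrak{g}_{\Fp}\cdot v = [\mathfrak{g}_{\Fp},\tilde v]$, where $\tilde v \in \mathfrak{g}_{\Fp}\backslash 0$ corresponds to $v$ under the isomorphism above. Proposition \ref{ChevComp}, applied with $k = \Fp$, gives $\dim_{\Fp}[\mathfrak{g}_{\Fp},\tilde v] \geq (2\rho,\theta)$ for every such $\tilde v$. Therefore $u \geq (2\rho,\theta)$, and the inequality $j(kG/I) \geq (2\rho,\theta)$ follows.

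There is no real obstacle here: the theorem is essentially a packaging together of Theorem \ref{Main} (the general lower bound on the homological height of a $\Gamma$-invariant right ideal) and Proposition \ref{ChevComp} (the computation of the minimal centraliser codimension in $\mathfrak{g}_{\Fp}$). The one detail that requires a moment of thought is the second step, where one must verify that in the conjugation setting the Lie algebra $\mathfrak{g}$ contains $\mathfrak{g}_{\Fp}$ acting on $V \cong \mathfrak{g}_{\Fp}$ by its natural adjoint representation; this reduces immediately to the stability of $\Phi(\Zp)$ under its own Lie bracket.
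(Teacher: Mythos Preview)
Your argument is correct and follows essentially the same route as the paper: set $\Gamma = G$ acting by conjugation, identify $V$ with $\Phi(\Fp)$ under the adjoint action via $x \mapsto p^t x$, apply Theorem \ref{Main}, and then invoke Proposition \ref{ChevComp} with $k = \Fp$. The only cosmetic difference is that the paper asserts $N_\Gamma = \Phi(\Zp)$ (citing \cite{AWZ2}), whereas you only use the inclusion $\Phi(\Zp) \subseteq N_\Gamma$; this is a harmless simplification since the lower bound $\mathfrak{g}\cdot v \supseteq [\mathfrak{g}_{\Fp},\tilde v]$ is all that is needed, though your phrase ``inclusion $\mathfrak{g}_{\Fp} \hookrightarrow \mathfrak{g}$'' would strictly require the equality to justify injectivity.
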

\begin{proof} $\Gamma = G$ acts on $G$ by conjugation, and this action is uniform in the sense of $\S\ref{UnifAct}$ because $G$ is uniform. Clearly $I$ is a $\Gamma$-invariant right ideal of $kG$. Following the proof of \cite[Theorem 3.4]{AWZ2}, we see that $N_\Gamma = \Phi(\Zp)$ and that $\mathfrak{g} = \Phi(\Fp)$. Moreover, $x \mapsto p^tx$ induces a $\mathfrak{g}$-module isomorphism between the adjoint $\mathfrak{g}$-module $\mathfrak{g}$ and the $\mathfrak{g}$-module $V = G/G^p$.

We can now use Theorem \ref{Main} to deduce that $j(kG/I)$ is bounded below by $\min\{\dim [\mathfrak{g},x] : x \in \mathfrak{g} \backslash 0\}$. But this number equals $(2\rho, \theta)$ by Proposition \ref{ChevComp}.
\end{proof}
It can be shown \cite[Exercise 6.2]{K} that $(2\rho,\theta) = 2\hat{h} - 2$, where $\hat{h}$ is the \emph{dual Coxeter number} of $\Phi$ that arises in the study of affine Lie algebras. A list of values of this invariant can be found in \cite[\S 6.1]{K}; we used this list to construct the table given in $\S\ref{Table}$.

\end{document}